\documentclass[a4paper, reqno, 11pt]{amsart}

\usepackage[english]{babel}
\usepackage{amsmath}
\usepackage{mathtools}
\usepackage{mathrsfs}
\usepackage{amssymb}
\usepackage{amsthm}
\usepackage{enumerate}
\usepackage{ifthen}
\usepackage{bm}
\usepackage{bbm}
\usepackage{color}
\usepackage{xcolor}
\usepackage{graphicx}
\provideboolean{shownotes} 
\setboolean{shownotes}{true}
\usepackage{hyperref}
\usepackage{geometry}
\usepackage[T1]{fontenc}
\usepackage{esint}
\usepackage{float}   
\usepackage{fancybox}		  
\usepackage{verbatim}

\usepackage{comment}

\newcommand{\margnote}[1]{
\ifthenelse{\boolean{shownotes}}%
{\marginpar{\raggedright\tiny\texttt{#1}}}%
{}%
}

\newcommand{\hole}[1]{
\ifthenelse{\boolean{shownotes}}%
{\begin{center} \fbox{ \rule {.25cm}{0cm}
\rule[-.1cm]{0cm}{.4cm} \parbox{.85\textwidth}{\begin{center}
\texttt{#1}\end{center}} \rule {.25cm}{0cm}}\end{center}}
{}
}
\newtheorem{thm}{Theorem}[section]

\newtheorem{prop}[thm]{Proposition}
\newtheorem{lem}[thm]{Lemma}
\newtheorem{cor}[thm]{Corollary}
\newtheorem{rem}[thm]{Remark}

\theoremstyle{definition}
\newtheorem{defn}[thm]{Definition}

\newcommand{\e}{\varepsilon}		       
\newcommand{\R}{\mathbb{R}}
\newcommand{\T}{\mathbb{T}}

\newcommand{\Z}{\mathbb{Z}}
\newcommand{\dive}{\mathop{\mathrm {div}}}

\newcommand{\weaktos}{\stackrel{*}{\rightharpoonup}}
\newcommand{\de}{\,\mathrm{d}}

\newcommand{\Xe}{X^\e}

\newcommand{\E}{\mathbb{E}}
\newcommand{\Pb}{\mathcal{P}}

\numberwithin{equation}{section}

\newcommand{\schema}[1]{{\bf \sc #1}}

\subjclass[MSC 2020]{35F21, 35Q84, 35Q89, 41A25, 49N80.}

\keywords{Mean Field Games; vanishing viscosity; rate of convergence; nonlocal coupling.}

\begin{document}

\title[Rate of the vanishing-viscosity approximation for MFG]{On the rate of the vanishing viscosity approximation for Mean Field Games with nonlocal coupling}

\author[G. Ciampa]{Gennaro Ciampa}
\address[G.\ Ciampa]{DISIM - Dipartimento di Ingegneria e Scienze dell'Informazione e Matematica\\ Universit\`a  degli Studi dell'Aquila \\Via Vetoio \\ 67100 L'Aquila \\ Italy}
\email[]{\href{gciampa@}{gennaro.ciampa@univaq.it}}

\author[A. Goffi]{Alessandro Goffi}
\address[A.\ Goffi]{Dipartimento di Matematica e Informatica ``U. Dini'', Universit\`a di Firenze Viale Morgagni 67/A, 50134 Firenze Italy} 
\email[]{\href{agoffi@}{alessandro.goffi@unifi.it}}

\begin{abstract}
We study quantitative convergence rates of the vanishing viscosity approximation of first-order time-dependent Mean Field Games with regularizing coupling acting in the Hamilton-Jacobi equation. Under standard structural assumptions on the Hamiltonian ensuring convergence of the vanishing viscosity approximation, previous results provide either qualitative convergence for the two unknown of the system or quantitative estimates merely for solutions of the Hamilton-Jacobi equation. In this work, we improve these convergence rates under the same assumptions and establish, in addition, quantitative estimates for the convergence of the associated forward Fokker-Planck equation. As a consequence, we obtain quantitative convergence rates for the full Mean Field Game system, thus extending and strengthening the existing theory for the first-order limit.
\end{abstract}

\maketitle

\section{Introduction}
We study the quantitative vanishing viscosity approximation of the first-order Mean Field Games (MFGs) system, namely the small noise limit $\e\to0$ of the viscous system
\begin{equation}\label{eq:mfg viscous}\tag{$\e$-MFG}
\begin{cases}
-\partial_t u_\e+ H(x, D u_\e)=\e\Delta u_\e+F[m_\e(t,\cdot)],\qquad &\mbox{on }\,Q_T:=(0,T)\times\T^d,\\
\partial_t m_\e-\dive(D_pH(x,D u_\e)m_\e))=\e\Delta m_\e, &\mbox{on }\, Q_T,\\
u_\e(T,\cdot)=G(x,m_\e(T,\cdot)),\qquad m_\e(0,\cdot)=m_0,&\mbox{on }\,\T^d,
\end{cases}
\end{equation}
where we denoted by $\T^d$ the standard $d$-dimensional torus, and the data of the problem are the (periodic) functions $u_T$ and $m_0$, the Hamiltonian function $H:\R^d\times\R^d\to \R$, and the regularizing coupling term $F$. The Cauchy problem \eqref{eq:mfg viscous} is given by a backward Hamilton-Jacobi-Bellman (HJB) equation coupled with a Fokker-Planck-Kolmogorov equation, which is solved forward in time. The unknowns are the value function of the representative player $u_\e$, and the population distribution $m_\e$. 
MFGs were introduced independently by Lasry and Lions \cite{LL} and by Huang, Malhamé and Caines \cite{HCM} with the aim of describing Nash equilibria in differential games with infinitely many players. In \eqref{eq:mfg viscous} the couplings appear on the right-hand side of the first-equation through the functional $F$, the drift of the Fokker-Planck-Kolmogorov (FPK) PDE and through the terminal datum of the HJ equation. The breadth of applications, combined with the mathematical depth of the forward-backward structure, have made MFGs one of the most active research areas in applied mathematics over the past two decades. For a comprehensive introduction we refer the reader to the lecture notes \cite{CardaliaguetPorretta}.
\subsection{Well-posedness: state of the art}
When the viscosity is strictly positive, the diffusive term $\e\Delta$ provides parabolic regularization to both the HJB and the FPK equations. Existence and uniqueness of classical solutions to the viscous MFG system were established, under suitable regularity assumptions on $H$ and $F$, in \cite{CardaliaguetPorretta,CGpar} (see also the references therein). The MFGs system at zero viscosity is structurally more delicate: the HJB equation becomes a purely first-order fully nonlinear equation and classical solutions develop singularities in finite time. A systematic approach requires the use of viscosity solutions for the HJB equation, and of measure-valued solutions for the FPK equation. Uniqueness is in general substantially harder to obtain and has been addressed via monotonicity conditions \cite{CardaliaguetPorretta}, see also \cite{GraberAlpar} for more recent developments under different hypotheses. 

\subsection{Vanishing viscosity method} 
The limit $\e\to0$ in the MFG system is motivated by both modeling and analytical considerations. From the modeling perspective, the viscosity parameter typically represents idiosyncratic noise in the dynamics of individual agents: sending it to zero corresponds to the deterministic regime, in which agents follow optimal trajectories without stochastic perturbations. Understanding the mechanism and the rate of convergence of the solutions of the viscous system towards the deterministic one is therefore fundamental to justify the use of viscous models as analytical and numerical regularizations of the inviscid limit. From the analytical perspective, the vanishing viscosity limit is intimately connected to selection principles: since the inviscid system may admit multiple solutions (in the weak or viscosity sense), viscous solutions are expected to select, under appropriate assumptions, a privileged solution, in analogy with the vanishing viscosity selection principle for scalar conservation laws \cite{BianchiniBressan}. We also remark that, for transport equations with irregular vector fields, the validity of the vanishing viscosity selection mechanism heavily depends on the specific regularity of the vector field. In fact, the limit is known to select a distinguished solution only under suitable assumptions, while counterexamples are also available; we refer to \cite{BCC,CCSo} for recent results in this direction. 
From a numerical point of view, standard numerical methods for first order equations introduce the so-called numerical viscosity, which represents the numerical counterpart of adding the viscous term to the equation. 
The quantitative vanishing viscosity limit of first-order HJ equations was addressed recently in several papers and it depends on the interplay between the geometric and regularity properties of $H$ and $u_T$; see e.g. \cite{CG25,CecchinGoffi,ChaintronDaudin} and references therein. Quantitative rates for the vanishing viscosity limit of Fokker-Planck equations are relatively easy to obtain in the smooth setting, i.e., when the vector field is Lipschitz continuous, see e.g. Lemma \ref{lem:stability-flows} or \cite{CecchinGoffi}, whereas they are much harder to establish in the irregular regime. Several recent works have addressed quantitative stability and convergence issues under Sobolev regularity assumptions on the vector field. In particular, quantitative rates for the vanishing viscosity limit have been obtained under the assumption of divergence-free vector fields \cite{BN,BCC}, while related quantitative stability estimates under suitable assumptions on the negative part of the divergence have been established in \cite{LiLuo, NSS}. For a comprehensive treatment of Fokker-Planck equations in the irregular setting, we refer to \cite{LBL}.\\
\\ 
Formally, by letting the viscosity parameter $\e\to 0$, one recovers the system of equations
\begin{equation}\label{eq:mfg}\tag{MFG}
\begin{cases}
-\partial_t u+ H(x,D u)=F[m(t,\cdot)],\qquad &\mbox{on }\,Q_T,\\
\partial_t m-\dive(D_pH(x,D u)m))=0, &\mbox{on }\,Q_T,\\
u(T,\cdot)=G(x,m(T)),\qquad m(0,\cdot)=m_0,&\mbox{on }\,\T^d.
\end{cases}
\end{equation}
This formal limit can be rigorously justified under several structural and regularity assumptions on $H, f,  u_T, m_0$ that we specify next. More precisely, we assume that
\begin{align}\label{Fm}\tag{F}
&\text{$F:C([0,T];\mathcal{P}(\T^d))\to C(\overline{Q}_T)$ is continuous with range into a bounded set of $L^\infty_t(W^{2,\infty}_x)$}\\
&\text{and $F$ is a bounded map from $C^\alpha([0,T];\mathcal{P}(\T^d))$ to $C^\alpha(\overline{Q}_T)$}
\end{align}
\begin{equation}
\text{$G:\mathcal{P}(\T^d)\to C(\overline{Q}_T)$ is continuous with range into a bounded set of $W^{2,\infty}$.}
\end{equation}
Moreover, we impose the following natural growth condition on $H\in C^2(\T^d\times\R^d)$: there exist constants
$C_H,\tilde{C}_H>0$ and $0<\theta\leq\Theta$ such that
\begin{align}
\tag{H1}\label{H1} & |D_xH(x,p)|\leq C_H(1+|p|^2), \, \\
\tag{H2}\label{H2} & |D_{xx}^2H(x,p)|\leq C_H|p|^{2}+\tilde{C}_H \ , \\
\tag{H3}\label{H3} & |D_{px}^2H(x,p)|\leq C_H|p|+\tilde{C}_H \ , \\
\tag{H4}\label{H4} & \theta \mathbb{I}_d\leq D^2_{pp}H(x,p)\leq \Theta \mathbb{I}_d\ ,
\end{align}
for every $x\in \T^d$, $p\in\R^d$, where with $\mathbb{I}_d$ we denote the identity matrix .
In this framework, by using a compactness argument, one can show (Theorem 1.7 in \cite{CardaliaguetPorretta}) that
\begin{align}
& u_\e\to u \,\,\mbox{in $C(\overline{Q}_T)$},\\
& m_\e \to m \,\,\mbox{in }L^\infty-\mbox{weak * and in }C([0,T];\mathcal{P}(\T^d)),
\end{align} 
where $\mathcal{P}(\T^d)$ denotes the set of probability measures on $\T^d$. As far as uniqueness is concerned, this is obtained if, in addition, $F$ and $G$ are monotone (non-decreasing) operators in the $L^2$ sense, namely
\begin{equation}\label{uniqueF}
\int_0^T\int_{\T^d}(F[m_1]-F[m_2])d(m_1-m_2)\geq0,\,\text{ for all }m_1,m_2\in C([0,T];\mathcal{P}(\T^d)),
\end{equation}
\begin{equation}\label{uniqueG}
\int_{\T^d}(G[m_1]-G[m_2])d(m_1-m_2)\geq0,\,\text{ for all }m_1,m_2\in C([0,T];\mathcal{P}(\T^d)),
\end{equation}
see \cite[Theorem 1.8]{CardaliaguetPorretta}.\\

 Our aim is to quantify this convergence with respect to the viscous parameter $\e$. To the best of our knowledge, the first quantitative convergence result available in the literature was provided in \cite{TangZhang}. Under the assumption of a smoothing nonlocal coupling, the authors established a convergence rate for the viscous Hamilton-Jacobi solution $u_\e$ of the form
$$
\|u_\e(t,\cdot)-u(t,\cdot)\|_{L^1(\T^d)}\leq C \e^{1/4},
$$
leaving the convergence rate of the Fokker-Planck component $m_\e$ unsettled. In this manuscript, we build upon their framework to sharpen the rate for the value function, obtaining
$$
\|u_\e(t,\cdot)-u(t,\cdot)\|_{L^1(\T^d)}\leq C \e^{1/2},
$$
and
$$
\|u_\e(t,\cdot)-u(t,\cdot)\|_{L^\infty(\T^d)}\leq C \e^{1/2}.
$$
Our approach is entirely based on integral methods inspired by L. C. Evans \cite{EvansARMA} and relies also on the duality structure of the system. The main difficulty stems from the presence of the coupling, which itself depends on $\e$ through the variable $m_\e$, which is a different feature compared to the sole Hamilton-Jacobi equation \cite{EvansARMA}.\\
Furthermore, we derive new quantitative estimates for the gradients, which allow us to prove an explicit convergence rate for the Fokker-Planck equation, namely \[
W_2(m_\e,m)\leq C \e^{1/8},\]
 where $W_2$ denotes the 2-Wasserstein distance. This yields a comprehensive convergence rate for the full MFG system. The key ingredient in our approach relies on the compactness of the state space $\T^d$ and exploits the semiconcavity of the solution along with the uniform convexity of the Hamiltonian in the gradient variable.\\

Finally, we mention the recent preprint \cite{cinesi2}, in which the authors study the vanishing viscosity approximation for Mean Field Games featuring a nonlocal and potentially non-separable Hamiltonian. They analyze the problem on $\mathbb{R}^d$ assuming the absence of nonlocal coupling in the first equation (i.e., $f=0$). By leveraging the decoupling field of the FBSDE associated with the MFG system, they obtain a convergence rate of order $\sqrt{\e}$ for the gradient of the value function $u_\e$. This, in turn, allows them to establish the same rate for the $2$-Wasserstein distance of $m_\e$. Finally, they exploit these two rates to deduce the convergence rate for the value function itself. In a certain sense, the specific structure of the nonlocal and non-separable Hamiltonian allows them to perform a decoupling that operates in the opposite direction compared to our approach.

\subsection{Main contributions}
\begin{itemize}
\item Under the uniform convexity condition on $H$, we improve the previously known rate of convergence of the first equation of the system: the paper \cite{TangZhang} proved the convergence of $u_\e-u$ of order $\mathcal{O}(\e^\frac14)$ both in $L^1$ and $L^\infty$, under certain regularizing assumptions on the coupling. Here we upgrade these results to $\mathcal{O}(\e^\frac12)$ and find new quantitative convergence of gradients in $L^\infty_t(L^2_x)$. The main tool is a $\mathcal{O}(\e)$ bound of the quantity
\[
\iint_{Q_T} (F[m_\e(t,\cdot)]-F[m(t,\cdot)])(m_\e(t,\cdot)-m(t,\cdot))\,\de x\de t,
\]
which is involved in the aforementioned Lasry-Lions monotonicity condition. This implies in particular
\[
\|\partial_\e F[m_\e(t,\cdot)]\|_{L^\infty(\T^d)},\|\partial_\e F[m_\e(t,\cdot)]\|_{L^1(\T^d)}\lesssim \frac{1}{\sqrt{\e}}
\]
when the coupling is regularizing; see Theorem \ref{mainMFG} and Corollary \ref{F1}-\ref{F2} respectively.

\item Under additional assumptions on $u_\e$ we prove a quantitative rate of the convergence of $m_\e$ to $m$ in Wasserstein spaces of order $\mathcal{O}(\e^\frac18)$. These are true in many cases, e.g. when $F$ satisfies the displacement monotonicity condition \cite{CirantAlpar,GraberAlpar}, when the coupling is convex and semiconcave, and in the case of small terminal data $u_T$ or short time horizons. 
\end{itemize}

\section{Preliminaries}
In this section we collect some preliminaries from \cite{Gangbo, Villani}.
We denote with $\mathsf{d}$ the geodesic distance on $\T^d$, which is given by
\begin{equation*}
	\mathsf{d}(x,y)=\min\{|x-y-k|:k\in\Z^d\,\,\mbox{such that }|k|\leq 2\},
\end{equation*}
upon the identification $\T^d=[0,1)^d$. We denote by $\Pb(\T^d)$ the set of probability measures equipped with the topology of the convergence of measures. 

\begin{defn}
Let $\mu,\nu\in \Pb(\T^d)$. We say that $\gamma\in\mathscr{P}(\T^{2d})$ is a transport plan between $\mu$ and $\nu$ provided that $\gamma(A \times \T^d) = \mu(A)$ and $\gamma(\T^d \times B) = \nu(B)$ for any pair of Borel sets $A,B \subset \T^d$. We denote with $\Pi(\mu,\nu)$ the set of such transference plans.
\end{defn}
With these notations, we can introduce the Wasserstein distances in the space $\Pb(\T^d)$.
\begin{defn}
Given two measures $\mu,\nu\in \Pb(\T^d)$, the $2\,$-Wasserstein distance between $\mu$ and $\nu$ is 
\begin{equation}
W_2(\mu,\nu):=\inf_{\gamma\in \Pi(\mu,\nu)}\left\{ \int_{\T^d\times\T^d} \mathsf{d}(x,y)^2\,\gamma(\de x,\de y) \right\}^{1/2}.
\end{equation}
\end{defn}
We recall that the Wasserstein distance metrizes the weak-$*$ topology of probability measures; in particular the following holds. 
\begin{prop}\label{prop:conv_w}
The Wasserstein space $(\Pb(\T^d),W_2)$ is a complete and separable metric space. 
Moreover, for a given $\mu\in\Pb(\T^d)$ and a sequence of measures $\mu^n\in\Pb(\T^d)$, $\mu^n\weaktos\mu$ if and only if $W_2(\mu,\mu^n)\to 0$ as $n\to\infty$.
\end{prop}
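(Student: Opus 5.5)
We must prove Proposition \ref{prop:conv_w}: that $(\Pb(\T^d),W_2)$ is a complete separable metric space and that $W_2$ metrizes weak-$*$ convergence. The key simplification is that $\T^d$ is compact with $\mathsf d\le \sqrt d/2$, so $W_2$ is bounded and weak-$*$ convergence coincides with narrow convergence (tested against $C(\T^d)$). The plan is: first verify that $W_2$ is a metric, then prove the equivalence of convergences, and deduce completeness and separability from it together with Prokhorov.

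\textbf{Metric.} Existence of optimal plans follows because $\Pi(\mu,\nu)$ is weak-$*$ compact in $\Pb(\T^{2d})$ (tight, being a closed subset of probabilities on a compact space) and $\gamma\mapsto\int \mathsf d^2\,d\gamma$ is continuous. Symmetry is obvious. If $W_2(\mu,\nu)=0$, an optimal $\gamma$ is supported on the diagonal, hence $\mu=\nu$. The triangle inequality follows from the gluing lemma: given optimal $\gamma_{12}\in\Pi(\mu_1,\mu_2)$ and $\gamma_{23}\in\Pi(\mu_2,\mu_3)$, disintegrate with respect to $\mu_2$ to obtain a measure $\pi$ on $\T^{3d}$ with these two marginals. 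Its $(1,3)$-marginal lies in $\Pi(\mu_1,\mu_3)$, and Minkowski's inequality in $L^2(\pi)$ applied to $\mathsf d(x,z)\le \mathsf d(x,y)+\mathsf d(y,z)$ gives the claim.

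\textbf{Equivalence of convergences.} Suppose $W_2(\mu^n,\mu)\to0$ and let $\varphi$ be Lipschitz with constant $L$. For optimal $\gamma_n$,
\[
\Big|\int\varphi\,d\mu^n-\int\varphi\,d\mu\Big|\le L\int \mathsf d\,d\gamma_n\le L\,W_2(\mu^n,\mu).
\]
Lipschitz functions are dense in $C(\T^d)$, so $\mu^n\weaktos\mu$.

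Conversely, suppose $\mu^n\weaktos\mu$ and let $\gamma_n$ be optimal plans. By compactness of $\Pb(\T^{2d})$, any subsequence has a further subsequence with $\gamma_n\weaktos\gamma$, and $\gamma\in\Pi(\mu,\mu)$ because marginals pass to the limit. However, $\gamma$ need not be the diagonal plan, so this alone does not give $W_2\to0$. This is the main obstacle, and I would handle it via Kantorovich duality. Since $\mathsf d^2$ is continuous on the compact space $\T^{2d}$,
\[
W_2^2(\mu^n,\mu)=\sup\Big\{\int\phi\,d\mu^n+\int\psi\,d\mu\Big\},
\]
where the supremum runs over $\mathsf d^2$-concave pairs with $\phi(x)+\psi(y)\le \mathsf d(x,y)^2$. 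Such potentials can be taken uniformly Lipschitz, with constant at most $2\,\mathrm{diam}(\T^d)$, and normalized, hence uniformly bounded.

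Then $W_2^2(\mu^n,\mu)=\int\phi_n\,d\mu^n+\int\psi_n\,d\mu$ for some such pair $(\phi_n,\psi_n)$. By Arzelà–Ascoli, along a subsequence $\phi_n\to\phi$ and $\psi_n\to\psi$ uniformly, and the limit pair is still admissible. Weak-$*$ convergence together with uniform convergence of $\phi_n$ gives
\[
\limsup_n W_2^2(\mu^n,\mu)\le \int\phi\,d\mu+\int\psi\,d\mu\le W_2^2(\mu,\mu)=0.
\]
The sub-subsequence argument then yields convergence of the whole sequence.

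As an alternative avoiding duality, I would use the plans $\pi_n$ induced by the pair of measures and argue via the lower/upper semicontinuity of $W_2$ under weak convergence of both marginals. Specifically, I would show that $(\mu,\nu)\mapsto W_2(\mu,\nu)$ is continuous for weak-$*$ convergence: given the optimal $\gamma$ for the limit, construct plans for $(\mu^n,\mu)$ that converge to the diagonal plan using gluing with a fine partition of $\T^d$ into small cubes.

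\textbf{Completeness and separability.} If $(\mu^n)$ is $W_2$-Cauchy, Prokhorov's theorem (compactness of $\Pb(\T^d)$) gives a weak-$*$ convergent subsequence. By the equivalence this subsequence converges in $W_2$, and a Cauchy sequence with a convergent subsequence converges. For separability, finite convex combinations of Dirac masses at rational points with rational weights are weak-$*$ dense: partition $\T^d$ into cubes of side $1/k$ and move the mass of each cube to a rational point inside it. This gives $W_2\le \sqrt d/k$ directly, which makes the density step immediate without using the equivalence.
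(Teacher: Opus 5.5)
The paper gives no proof of Proposition~\ref{prop:conv_w}. It is quoted from \cite{Gangbo, Villani}, where it is established on general Polish spaces. Your argument is a correct, self-contained proof that uses the compactness of $\T^d$ at every step.

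The real difference lies in the implication $\mu^n\weaktos\mu\Rightarrow W_2(\mu^n,\mu)\to0$. The textbook route (e.g.\ in Villani) proceeds as follows:
\begin{itemize}
\item it extracts a weak limit of optimal plans;
\item it uses stability of optimality under weak convergence (through $c$-cyclical monotonicity) to identify that limit as the diagonal coupling of $\mu$ with itself;
\item in the non-compact case, it needs convergence of second moments to pass to the limit in the cost.
\end{itemize}
You bypass the stability theorem with Kantorovich duality plus Arzel\`a--Ascoli applied to normalized $\mathsf{d}^2$-concave potentials. These potentials are equi-Lipschitz and equibounded precisely because the cost is bounded and uniformly Lipschitz on the torus. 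Your route is shorter and well adapted to the setting of the paper, but it is tied to compactness, whereas the cited proof also handles tightness and moments. Note also that your argument actually shows $(\Pb(\T^d),W_2)$ is compact, which yields completeness and separability at once.

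Two small points need fixing.

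\textbf{Separability.} The measure $\sum_i\mu(Q_i)\delta_{q_i}$ has real weights. You still have to replace them by rationals $r_i$ summing to $1$, at cost $W_2^2\le \mathrm{diam}(\T^d)^2\sum_i|\mu(Q_i)-r_i|$. Alternatively, simply invoke compactness.

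\textbf{The alternative avoiding duality.} As written it is not a proof. To make it one:
\begin{itemize}
\item shift the grid so that $\mu$ charges no cube boundary, which gives $\mu^n(Q_i)\to\mu(Q_i)$;
\item couple the common mass inside each cube, and couple the vanishing discrepancy arbitrarily;
\item conclude $\limsup_n W_2^2(\mu^n,\mu)\le d/k^2$ for every $k$.
\end{itemize}
This is in fact the most elementary route, and it needs neither duality nor gluing.
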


In addition, we have the following.
\begin{prop}\label{prop:wasserstein flussi}
Let $\mu \in \mathcal{P}(\T^d)$, and let $X, Y: \T^d \to \T^d$ be Borel maps. Then
$$
W_2^2(X_\# \mu, Y_\# \mu) \leq \int_{\T^d} \mathsf{d}(X(x), Y(x))^2 \, \de \mu(x).
$$
Assume further that $\mu$ is absolutely continuous w.r.t. the Lebesgue measure on $\T^d$ with density $m_0 \in L^\infty(\T^d)$. Then, for any Borel maps $X, Y: \T^d \to \T^d$, we have
$$
W_2^2(X_\# \mu, Y_\# \mu) \leq \|m_0\|_{L^\infty(\T^d)} \int_{\T^d} \mathsf{d}(X(x), Y(x))^2 \, \de x.
$$
\end{prop}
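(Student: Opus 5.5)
The first inequality follows from a coupling argument, and the second from the first by a change of reference measure. Let $\gamma := (X,Y)_\#\mu$, i.e. $\gamma(A\times B)=\mu(X^{-1}(A)\cap Y^{-1}(B))$ for Borel $A,B\subset\T^d$. Since $X,Y$ are Borel, $\gamma$ is a well-defined Borel probability measure on $\T^{2d}$.

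Its marginals are the right ones:
$$\gamma(A\times\T^d)=\mu(X^{-1}(A))=X_\#\mu(A), \qquad \gamma(\T^d\times B)=Y_\#\mu(B).$$
Hence $\gamma\in\Pi(X_\#\mu,Y_\#\mu)$. This plan is admissible in the infimum defining $W_2$, and the change of variables formula for push-forwards, applied to the continuous nonnegative function $\mathsf d^2$ on $\T^{2d}$, gives
$$
W_2^2(X_\#\mu,Y_\#\mu)\le \int_{\T^{2d}}\mathsf d(x,y)^2\,\gamma(\de x,\de y)=\int_{\T^d}\mathsf d(X(x),Y(x))^2\,\de\mu(x).
$$
This is the first inequality.

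For the second, write $\de\mu=m_0\,\de x$ with $m_0\ge0$ and $m_0\in L^\infty(\T^d)$. Since the integrand $\mathsf d(X(x),Y(x))^2$ is nonnegative and Borel measurable,
$$
\int_{\T^d}\mathsf d(X(x),Y(x))^2 m_0(x)\,\de x\le \|m_0\|_{L^\infty(\T^d)}\int_{\T^d}\mathsf d(X(x),Y(x))^2\,\de x.
$$
Combining this with the first inequality gives the claim.

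There is no real obstacle here. The only points requiring care are the measurability of $(X,Y)$, which holds because the product of Borel maps is Borel into the product σ-algebra (the Borel σ-algebra of $\T^{2d}$, since the torus is separable), and the fact that $\mathsf d$ is bounded and continuous on the compact space $\T^{2d}$, so every integral above is finite.
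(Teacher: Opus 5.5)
Your proof is correct: the plan $\gamma=(X,Y)_\#\mu\in\Pi(X_\#\mu,Y_\#\mu)$ gives the first bound, and the pointwise bound $0\le m_0\le\|m_0\|_{L^\infty(\T^d)}$ a.e.\ gives the second. The paper does not prove this proposition but quotes it from the standard optimal transport references it cites, where it is proved by this same coupling argument.
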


\section{Estimates for linear Fokker-Planck equations}
In this section we investigate the vanishing viscosity limit for the linear Fokker-Planck equation driven by a Lipschitz drift having $L^1$ integrability in the time variable. We derive quantitative convergence estimates in Wasserstein metrics by comparing the associated stochastic and deterministic flows. The argument is based on the convergence of stochastic characteristics together with the Feynman-Kac representation formula.\\
\\
We start by recalling some basic definitions. Let $b:[0,T]\times\T^d\to\R^d$ be a bounded vector field with $b\in L^1((0,T);W^{1,\infty}(\T^d))$, meaning that there exists a function $\alpha\in L^1(0,T)$ such that
\begin{equation}\label{def:lip}\tag{Lip}
|b(t,x)-b(t,y)|\leq \alpha(t)|x-y|,\quad\mbox{for all }x,y\in \T^d.
\end{equation}
For every initial condition $x\in\T^d$, we consider the ordinary differential equation \begin{equation}
\label{ode}\tag{ODE}
\begin{cases}
\dot{X}(t,x)=b(t,X(t,x)),\\
X(0,x)=x.
\end{cases}
\end{equation}
By the classical Cauchy-Lipschitz theory, for every initial condition $x\in\T^d$ there exists a unique global solution $t\to X(t,x)$. The map $X:[0,T]\times\T^d\to\T^d$ is called the {\em flow} generated by the vector field $b$. Moreover, for every $t\in [0,T]$, the map $x\to X(t,x)$ is Lipschitz continuous.\\
\\
We now introduce the stochastic counterpart of the previous dynamics.
Let $(\Omega, (\mathcal{F}_t)_{t\geq 0}, \mathbb{P})$ be a given filtered probability space, and let $W_t$ be a $\T^d$-valued adapted Brownian motion, i.e. $W_0=0$. For $\e>0$, we consider the stochastic differential equation
\begin{equation}\tag{SDE}
\begin{cases}
\de \Xe(t,x)=b_\e(t,\Xe(t,x))\de s + \sqrt{2\e}\, \de W_t,\\
\Xe(0,x)=x.
\end{cases}\label{eq:sde}
\end{equation}
We remark that  \emph{strong} existence and path-wise uniqueness hold for \eqref{eq:sde} if the vector field $b_\e\in L^1((0,T);W^{1,\infty}(\T^d))$; see \cite{Kunita}. This means that we can construct a solution $\Xe$ to \eqref{eq:sde} on any given filtered probability space equipped with any given adapted Brownian motion. Hence, for every initial datum $x\in \T^d$, there exists a unique adapted process $\{X^\e(t,x)\}_{t\in[0,T]}$ satisfying the above equation almost surely. The stochastic process $X^\e$ will be referred to as the {\em stochastic flow} associated with the drift $b_\e$. For the applications to \eqref{eq:mfg viscous}, it is important to allow the vector field to depend on the viscosity parameter $\e$. In the following, we compare the stochastic trajectories $X^\e$ with the deterministic flow $X$ as $\e\to 0$ and give a continuous dependence estimate. First, we collect the assumptions on the vector fields:
\begin{align}
\|b\|_{L^\infty(\T^d)}+\|b_\e\|_{L^\infty(\T^d)}&\leq M, \qquad \|b\|_{L^1(0,T;W^{1,\infty}(\T^d))}+\|b_\e\|_{L^1(0,T;W^{1,\infty}(\T^d))}\leq L,\tag{B}\label{ipotesi B}\\
|b_\e(t,x)-b_\e(t,y)|&\leq \alpha(t)|x-y|,\quad\mbox{for all }x,y\in \T^d,\tag{Lip-$\e$}\label{ipotesi Lip e}
\end{align}
where $M>0$ and $\alpha\in L^1(0,T)$ do not depend on $\e$.

\begin{lem}\label{lem:stability-flows}
Let $b, b_\e\in L^1((0,T);W^{1,\infty}(\T^d))$ with bounds uniformly in $\e$ as in \eqref{ipotesi B}, \eqref{def:lip}, \eqref{ipotesi Lip e}. Let $X, \Xe$ be, respectively, the flow and the stochastic flow of $b$ and $b_\e$. Then, there exists a constant $C>0$ depending on $T,M,L,d,$ and $\|\alpha\|_{L^1}$, such that 
\begin{equation}
\label{stima lip}
\int_{\T^d}\mathbb{E}[\mathsf{d}(\Xe(t,x),X(t,x))]\de x\leq C\left(\sqrt{\e}+\sqrt{\|b_\e-b\|_{L^1_{t,x}}}  \right).
\end{equation}
In particular, if $b_\e=b$, we have the classical $\mathcal{O}(\sqrt\e)$ rate for transport PDEs with Lipschitz drifts.
\end{lem}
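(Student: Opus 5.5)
We compare the two trajectories pathwise, working in a lift to $\R^d$ so that $\mathsf{d}(\Xe,X)\le|\Xe-X|$. Subtracting the integral forms of \eqref{eq:sde} and \eqref{ode} gives, almost surely,
\begin{equation*}
\Xe(t,x)-X(t,x)=\int_0^t\bigl(b_\e(s,\Xe(s,x))-b(s,X(s,x))\bigr)\de s+\sqrt{2\e}\,W_t .
\end{equation*}
We split the drift difference as
\begin{equation*}
\bigl[b_\e(s,\Xe)-b_\e(s,X)\bigr]+\bigl[b_\e(s,X)-b(s,X)\bigr].
\end{equation*}
The first bracket is controlled by \eqref{ipotesi Lip e}, namely by $\alpha(s)|\Xe-X|$. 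This gives
\begin{equation*}
|\Xe(t,x)-X(t,x)|\le\int_0^t\alpha(s)|\Xe(s,x)-X(s,x)|\de s+\int_0^t|b_\e-b|(s,X(s,x))\de s+\sqrt{2\e}\sup_{s\le t}|W_s|.
\end{equation*}
Gronwall's lemma with the $L^1$ weight $\alpha$ then yields
\begin{equation*}
|\Xe-X|(t,x)\le e^{\|\alpha\|_{L^1}}\Bigl(\int_0^T|b_\e-b|(s,X(s,x))\de s+\sqrt{2\e}\sup_{s\le T}|W_s|\Bigr).
\end{equation*}

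Next we take expectations and integrate in $x$ over $\T^d$. By Doob's inequality, $\E\sup_{s\le T}|W_s|\le C\sqrt{dT}$, so the noise contribution is $C\sqrt\e$.

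For the drift-mismatch term, we use Fubini and change variables $y=X(s,x)$. The flow of a Lipschitz field has Jacobian bounded below by $\exp(-\int_0^s\|\dive b\|_\infty)\ge e^{-dL}$, so the push-forward of Lebesgue measure under $X(s,\cdot)$ has density at most $e^{dL}$. Hence
\begin{equation*}
\int_{\T^d}\int_0^T|b_\e-b|(s,X(s,x))\de s\de x\le e^{dL}\|b_\e-b\|_{L^1_{t,x}}.
\end{equation*}
This gives a linear bound $C\|b_\e-b\|_{L^1}$. To reach the stated $\sqrt{\|b_\e-b\|_{L^1}}$, we use that $\mathsf{d}\le\sqrt d/2$ is bounded. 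Either we interpolate, bounding $\min(a,\sqrt d)\le C\sqrt a$, or we run the argument on $\E|\Xe-X|^2$ via It\^o's formula and then apply Jensen. Both produce the square-root form, which is weaker than the linear bound and therefore also valid. Finally, the case $b_\e=b$ is immediate from the same estimate.

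The main obstacle is the change of variables along the flow with only $L^1$-in-time Lipschitz control. This step is where \eqref{ipotesi B} enters through the compressibility bound, since $\|\dive b\|_{L^1_tL^\infty_x}\le dL$. One must also handle the periodic lift carefully, so that the Gronwall step is carried out on $\R^d$-valued lifts where \eqref{ipotesi Lip e} holds with the Euclidean norm.
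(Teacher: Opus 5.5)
Your proof is correct, and it takes a different route from the paper's.

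\textbf{The paper's route.} The paper applies It\^o's formula to $|\Xe-X|^2$ and adds and subtracts $b(\tau,\Xe)$. The Gronwall loop is therefore closed with the Lipschitz constant of the limit field $b$, and the mismatch term is $\E\int_0^t\int_{\T^d}|b_\e-b|^2(\tau,\Xe(\tau,x))\,\de x\,\de\tau$. This forces a change of variables along the \emph{stochastic} flow, i.e.\ an $L^\infty$ bound on the density of the law of $\Xe(\tau,\cdot)$ started from Lebesgue measure, controlled by $\dive b_\e$. The square root then comes from bounding $|b_\e-b|^2\le 2M|b_\e-b|$ in a second-moment estimate and passing to the first moment by Jensen.

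\textbf{Your route.} You avoid It\^o altogether. Closing Gronwall pathwise with \eqref{ipotesi Lip e} puts the mismatch along the \emph{deterministic} flow $X$. Only the classical Jacobian lower bound for a Lipschitz flow (through $\dive b$) is then needed, and the noise enters only through $\E\sup_{s\le T}|W_s|$. This gives the stronger estimate $C(\sqrt\e+\|b_\e-b\|_{L^1_{t,x}})$. The stated square-root form follows from your interpolation, or more simply from $\|b_\e-b\|_{L^1_{t,x}}\le 2MT$.

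\textbf{A caveat for the Corollary.} Corollary~\ref{cor:convergenza fp}, through Proposition~\ref{prop:wasserstein flussi}, actually needs the second moment $\E\int_{\T^d}\mathsf{d}(\Xe,X)^2\,\de x$. The paper's It\^o computation delivers this directly, whereas the lemma as stated only records the first moment. Your pathwise inequality yields the second moment too: square it and use $\int_0^T|b_\e-b|(s,X(s,x))\,\de s\le 2MT$. This gives $\E\int_{\T^d}|\Xe-X|^2\,\de x\le C(\e+\|b_\e-b\|_{L^1_{t,x}})$, so you lose nothing relative to the paper. The only trade-off is that the two arguments put the Lipschitz and divergence requirements on different fields ($b_\e$ versus $b$), and both are available under the lemma's hypotheses.
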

\begin{proof}
We consider the difference $|\Xe-X|^2$: we apply It\^o's formula and we get
\begin{equation}\label{eq:ito}
\begin{aligned}
\frac{|\Xe(t,x)-X(t,x)|^2}{2}&=\int_0^t [(b_\e(\tau,\Xe(\tau,x))-b(\tau,X(\tau,x)))\cdot (\Xe(\tau,x)-X(\tau,x))+d\e]\de \tau\\&+\sqrt{2\e}\int_0^t (\Xe(\tau,x)-X(\tau,x))\cdot\de W_\tau.
\end{aligned}
\end{equation}
We add and subtract $b(\tau,\Xe(\tau,x))$
\begin{equation}\label{eq:ito2}
\begin{aligned}
\frac{|\Xe(t,x)-X(t,x)|^2}{2}&=\int_0^t [(b_\e(\tau,\Xe(\tau,x))-b(\tau,\Xe(\tau,x)))\cdot (\Xe(\tau,x)-X(\tau,x))]\de \tau\\
&+\int_0^t [(b(\tau,X(\tau,x))-b(\tau,\Xe(\tau,x)))\cdot (\Xe(\tau,x)-X(\tau,x))]\de \tau\\
&+d\e t+\sqrt{2\e}\int_0^t (\Xe(\tau,x)-X(\tau,x))\cdot\de W_\tau.
\end{aligned}
\end{equation}
For the first term on the right hand side, we use Young inequality to write
\begin{align}
\int_0^t [(b_\e(\tau,\Xe(\tau,x))&-b(\tau,\Xe(\tau,x)))\cdot (\Xe(\tau,x)-X(\tau,x))]\de \tau\nonumber\\
&\leq \int_0^t \frac{|b_\e(\tau,\Xe(\tau,x))-b(\tau,\Xe(\tau,x))|^2}{2}+\frac{|\Xe(\tau,x)-X(\tau,x))|^2}{2}\de \tau.\label{asd}
\end{align}
For the second term we use the Lipschitz regularity of $b$ and then we get
\begin{equation}\label{eq:ito3}
\begin{aligned}
\frac{|\Xe(t,x)-X(t,x)|^2}{2}&\leq \int_0^t \frac{|b_\e(\tau,\Xe(\tau,x))-b(\tau,\Xe(\tau,x))|^2}{2}\de\tau\\
&+\int_0^t \left(\frac12+\alpha(\tau)\right) |\Xe(\tau,x)-X(\tau,x)|^2\de \tau\\
&+d\e t+\sqrt{2\e}\int_0^t (\Xe(\tau,x)-X(\tau,x))\cdot\de W_\tau.
\end{aligned}
\end{equation}

We integrate in space and we take the expected value of both side to obtain
\begin{align*}
\E\left[\int_{\T^d}|\Xe(t,x)-X(t,x)|^2\de x\right]&\leq 2d\e t+\int_0^t \left(1+2\alpha(\tau)\right) \E\left[\int_{\T^d}|\Xe(\tau,x)-X(\tau,x)|^2\de x\right] \de \tau\\
&+\E\left[\int_0^t\int_{\T^d}|b_\e(\tau,\Xe(\tau,x))-b(\tau,\Xe(\tau,x))|^2\de x\de \tau\right],
\end{align*}
where we have exploited the fact that stochastic integral has zero expectation.
Now, since the vector field $b_\e$ satisfies \eqref{ipotesi B}, we have that the flow $X^\e$ satisfies the change of variables formula 
\[
\|f(X^\e(t,\cdot))\|_{L^p(\T^d)}\leq e^{\|\dive b_\e\|_{L^1(0,T;L^\infty(\T^d))}}\|f\|_{L^p(\T^d)},
\]
and moreover, by the boundedness of $b_\e$ we get
\begin{align*}
\E\left[\int_0^t\int_{\T^d}|b_\e(\tau,\Xe(\tau,x))\right.&\left.-b(\tau,\Xe(\tau,x))|^2\de x\de \tau\right]\\
&\leq e^{dL}\int_0^t\int_{\T^d}|b_\e(\tau,x)-b(\tau,x)|(|b_\e(\tau,x)|+|b(\tau,x)|)\de x\de \tau\\
&\leq 2M e^{dL} \|b_\e-b\|_{L^1(Q_T)}.
\end{align*}

Then, we define the quantity
\[
z(t):=\E\left[\int_{\T^d}|\Xe(t,x)-X(t,x)|^2\de x\right],
\]
and we rewrite our estimate as
\begin{equation}
\label{finale}
z(t)\leq 2d\e t+2M e^{dL} \|b_\e-b\|_{L^1(Q_T)}+\int_0^t(1+2\alpha(\tau))z(\tau)\de\tau.
\end{equation}
By applying Gronwall's lemma  to \eqref{finale}, using Jensen's inequality and the inequality $\mathsf{d}(x,y)\leq |x-y|$, we can conclude
\begin{equation}
\E\left[\|\Xe(t,\cdot)-X(t,\cdot)\|_{L^2(\T^d)}\right]\leq \left(\sqrt{2d\e T+2M e^{dL}\|b_\e-b\|_{L^1(Q_T)}}\right)e^{\frac{T}{2}+\|\alpha\|_{L^1}},
\end{equation}
which implies the desired estimate.
\end{proof}

We now turn to the analysis of the Fokker-Planck equation and of its vanishing viscosity limit. We begin by recalling the Cauchy problems associated with the deterministic and stochastic dynamics, together with their representation formulas in terms of the corresponding flows.

Let $m_0\in\mathcal{P}(\T^d)$ be an initial probability measure. We first consider the continuity equation
\begin{equation}\label{eq:ce}\tag{CE}
\begin{cases}
\partial_t m+\dive(mb)=0,\\
m(0,\cdot)=m_0,
\end{cases}
\end{equation}
where $b$ is the Lipschitz vector field introduced above. It is well known that the unique distributional solution is given by the push-forward of the initial measure through the deterministic flow $X$, namely
\begin{equation}
m(t,\cdot)=X(t,\cdot)_\#m_0.
\end{equation}
For $\e>0$ we next consider the viscous Fokker-Planck equation
\begin{equation}\label{eq:fp}\tag{FP}
\begin{cases}
\partial_t m_\e+\dive(m_\e b_\e)=\e\Delta m_\e,\\
m_\e(0,\cdot)=m_0.
\end{cases}
\end{equation}
The corresponding solution admits the probabilistic representation
\begin{equation*}
m_\e(t,x)=X^\e(t,\cdot)_\#m_0,
\end{equation*}
where $X^\e$ is the stochastic flow of $b_\e$.

Combining the previous representation formulas with the quantitative convergence estimates for the stochastic characteristics we obtain the following stability estimate:
\begin{cor}\label{cor:convergenza fp}
Assume $m_0\in L^\infty(\T^d)$ and $b\in L^1((0,T);W^{1,\infty}(\T^d))$ and denote by $m_\e,m\in C([0,T];L^\infty(\T^d))$ the unique solutions of, respectively, \eqref{eq:fp} and \eqref{eq:ce}. Then, there exists a constant $C>0$ depending on $T,M,L,d,\|\alpha\|_{L^1}$, and $\|m_0\|_\infty$, such that, for every $t\in[0,T]$ one has 
\begin{equation}
W_2(m_\e(t,\cdot),m(t,\cdot))\leq C\left(\sqrt{\e}+\sqrt{\|b_\e-b\|_{L^1(Q_T)}}\right).
\end{equation}
\end{cor}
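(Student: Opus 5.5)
The plan is to combine the representation formulas $m(t,\cdot)=X(t,\cdot)_\#m_0$ and $m_\e(t,\cdot)=X^\e(t,\cdot)_\#m_0$ with Proposition \ref{prop:wasserstein flussi} and Lemma \ref{lem:stability-flows}. The one wrinkle is that $X^\e(t,\cdot)$ is random, so $m_\e(t)$ is not the push-forward of $m_0$ by a single deterministic map. Instead it is the law of $X^\e(t,\xi)$, where $\xi$ is distributed as $m_0$ and is independent of $W$. Equivalently,
\[
m_\e(t,\cdot)=\int_\Omega X^\e(t,\cdot,\omega)_\#m_0\,\de\mathbb{P}(\omega).
\]

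First I build a coupling directly. Consider the map $(x,\omega)\mapsto (X^\e(t,x,\omega),X(t,x))$ on $\T^d\times\Omega$ with the product measure $m_0\otimes\mathbb{P}$. Its push-forward $\gamma$ has first marginal $m_\e(t)$, by the representation formula, and second marginal $X(t,\cdot)_\#m_0=m(t)$. Hence $\gamma\in\Pi(m_\e(t),m(t))$, and therefore
\[
W_2^2(m_\e(t),m(t))\le \int_{\T^d}\E\big[\mathsf{d}(X^\e(t,x),X(t,x))^2\big]\,m_0(x)\de x\le \|m_0\|_{L^\infty}\int_{\T^d}\E\big[|X^\e(t,x)-X(t,x)|^2\big]\de x .
\]
This is the randomized version of the second inequality in Proposition \ref{prop:wasserstein flussi}. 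Alternatively, one could apply that proposition for each fixed $\omega$ and use joint convexity of $W_2^2$ under mixtures, which gives the same bound.

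Second, I need the \emph{quadratic} quantity $z(t)=\E\int|X^\e-X|^2\de x$, not the first-moment quantity in the statement of Lemma \ref{lem:stability-flows}. The proof of that lemma in fact bounds $z(t)$ before Jensen is applied: the Gronwall step applied to \eqref{finale} gives
\[
z(t)\le \big(2d\e T+2Me^{dL}\|b_\e-b\|_{L^1(Q_T)}\big)e^{T+2\|\alpha\|_{L^1}}.
\]
Inserting this bound into the previous display and taking square roots with $\sqrt{a+b}\le\sqrt a+\sqrt b$ gives
\[
W_2(m_\e(t),m(t))\le C\big(\sqrt\e+\sqrt{\|b_\e-b\|_{L^1(Q_T)}}\big),
\]
with $C$ depending only on $T,M,L,d,\|\alpha\|_{L^1},\|m_0\|_\infty$.

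The only real obstacle is justifying the representation $m_\e=\mathrm{Law}(X^\e(t,\xi))$ and the coupling above. Concretely, this means checking that the averaged push-forward solves \eqref{eq:fp} in the distributional sense, which follows from Itô's formula applied to test functions, and then invoking uniqueness of bounded solutions of \eqref{eq:fp} with Lipschitz drift. Uniqueness for \eqref{eq:ce} is classical. Both facts are taken as given in the text preceding the corollary, so the proof reduces to the two estimates above.
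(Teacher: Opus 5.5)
Your proposal is correct and follows the paper's route: the representation formulas combined with Proposition \ref{prop:wasserstein flussi} and the Gronwall bound from Lemma \ref{lem:stability-flows}. You also make explicit two points that the paper's one-line proof leaves implicit. First, $m_\e(t)$ is the $\mathbb{P}$-average of the push-forwards $X^\e(t,\cdot,\omega)_\#m_0$, so a randomized coupling (or convexity of $W_2^2$ under mixtures) is needed. Second, the stated rate requires the second-moment quantity $z(t)$ from the Gronwall step, not the first-moment bound in the lemma's statement.
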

\begin{proof}
The result is a straightforward consequence of Lemma \ref{lem:stability-flows} and Proposition \ref{prop:wasserstein flussi}.
\end{proof}

\subsection{Applications to MFGs}We conclude this section with a few remarks concerning the application of the previous result to MFGs. In such a setting, the vector field appearing in the linear Fokker-Planck equation \eqref{eq:fp} is no longer prescribed, but it is coupled with the Hamilton-Jacobi equation through the law
\begin{equation}\label{campo viscoso}
b_\e(t,x) = -D_pH(x,Du_\e(t,x)).
\end{equation}
Accordingly, the limiting drift is given by
\begin{equation}\label{campo}
b(t,x) = -D_pH(x,Du(t,x)),
\end{equation}
where $u_\e$ and $u$ denote respectively the viscous and first-order Hamilton-Jacobi solutions. The convergence estimate established in Corollary \ref{cor:convergenza fp} yields a rate depending on
$$
\|b_\e-b\|_{L^1(Q_T)},
$$
which, in the MFG framework, reduces to controlling
$$
\|D_pH(x,Du_\e)-D_pH(x,Du)\|_{L^1(Q_T)}.
$$

The key point is that, thanks to the regularizing effect of the nonlocal coupling in the Hamilton-Jacobi equation, this quantity can be estimated independently of the convergence of the density variable $m_\e$. In this sense, the regularizing structure of the coupling allows one to partially decouple the problem of quantifying the vanishing viscosity limit of the system: the convergence rate for the Hamilton-Jacobi equation can first be transferred to the drift field, and only afterwards propagated to the Fokker-Planck equation through the stability estimates proved in this section. This strategy will be developed in the next section.

Finally, let us comment on the topology in which convergence is obtained. Corollary \ref{cor:convergenza fp} provides quantitative convergence in Wasserstein distance, which is the natural framework for continuity equations, since metrizes the weak convergence while remaining compatible with the underlying flow structure. Since the initial datum $m_0\in L^\infty(\T^d)$, one may ask whether stronger convergence results can be obtained, for instance in $L^p$-spaces. In the present setting, the main obstruction is not merely quantitative. Indeed, obtaining strong convergence for the densities would typically require strong convergence of the divergence of the drift field, namely
$$
\dive\bigl(D_pH(x,Du^\varepsilon)\bigr)\to\dive\bigl(D_pH(x,Du)\bigr),
$$
in a suitable topology. Such a requirement appears substantially stronger than the regularity naturally available for Hamilton-Jacobi equations, even in the presence of regularizing couplings. For this reason, Wasserstein estimates seem to provide the most robust and natural notion of quantitative convergence in this framework.

\section{Estimates for MFGs in the vanishing viscosity limit}
We start with a well-known semiconcavity property of solutions.
\begin{lem}\label{semic}
Assume \eqref{Fm}, \eqref{H1}-\eqref{H4}. We have
\[
D^2u_\e\leq C\mathbb{I}_d,
\]
for a constant independent of $\e$. 
This implies
\[
\|\Delta u_\e(t,\cdot)\|_{L^1(\T^d)}\leq K,
\]
for a constant $K$ independent of $\e$. Moreover, the solution is Lipschitz continuous, with a constant independent of $\e$.
\end{lem}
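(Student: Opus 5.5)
The plan is to treat the Hamilton--Jacobi equation in \eqref{eq:mfg viscous} as a linear-in-$m_\e$-independent problem: we freeze the coupling $f_\e(t,x):=F[m_\e(t,\cdot)]$ and terminal datum $g_\e:=G(x,m_\e(T))$. By \eqref{Fm} and the assumption on $G$, the families $f_\e$ and $g_\e$ are bounded in $L^\infty_t(W^{2,\infty}_x)$ and $W^{2,\infty}$ uniformly in $\e$. All three claims then reduce to uniform estimates for $-\partial_t u-\e\Delta u+H(x,Du)=f(t,x)$, $u(T)=g$, with $f,g$ having uniformly bounded $W^{2,\infty}$ norms. These are proved by the Bernstein/maximum principle method. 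We first work with classical solutions, which exist for $\e>0$, and we note that none of the constants depend on $\e$.

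\textbf{Step 1: $L^\infty$ and Lipschitz bounds.} The comparison principle gives $\|u_\e\|_\infty\le \|g\|_\infty+T(\|f\|_\infty+\sup_x|H(x,0)|)$. For the gradient bound we would run Bernstein's argument on $w=\tfrac12|Du_\e|^2$. Differentiating the equation in $x_k$ and multiplying by $\partial_k u_\e$ gives
\[
-\partial_t w-\e\Delta w+D_pH\cdot Dw+\e|D^2u_\e|^2+D_xH\cdot Du_\e=Df\cdot Du_\e .
\]
Using \eqref{H1} alone produces the term $|p|^3$, which is bad. A cleaner route is to use \eqref{H4}, which gives $H(x,p)\ge \frac\theta2|p|^2-C$. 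One can then first obtain a one-sided bound: semiconcavity (Step 2) plus the $L^\infty$ bound implies a Lipschitz bound on the torus, since a semiconcave periodic function with bounded oscillation is Lipschitz with $\|Du\|_\infty\le C(\mathrm{osc}\,u,C_{sc},d)$. So I would prove Step 2 first and deduce Lipschitz continuity from it. Alternatively, the Lipschitz bound follows from the optimal control representation $u_\e(t,x)=\inf_\alpha\E[\int_t^T L(X_s,\alpha_s)+f\,ds+g(X_T)]$: compare the controls started at $x$ and $y$ with the same noise, and use the boundedness of $Df$ and $Dg$ together with the fact that optimal controls are bounded.

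\textbf{Step 2: semiconcavity.} For a unit vector $\xi$ let $v=\partial_{\xi\xi}u_\e$. Differentiating the equation twice gives
\[
-\partial_t v-\e\Delta v+D_pH\cdot Dv+D^2_{pp}H(D\partial_\xi u_\e,D\partial_\xi u_\e)+2D^2_{px}H\,\xi\cdot D\partial_\xi u_\e+D^2_{xx}H\,\xi\cdot\xi=\partial_{\xi\xi}f .
\]
By \eqref{H4}, the quadratic term is at least $\theta|D\partial_\xi u_\e|^2\ge\theta v^2$. The cross term is absorbed via \eqref{H3} and Young's inequality:
\[
|2D^2_{px}H\,\xi\cdot D\partial_\xi u_\e|\le \tfrac\theta2|D\partial_\xi u_\e|^2+C(1+|Du_\e|^2).
\]
Then \eqref{H2} controls the $D^2_{xx}H$ term by $C(1+|Du_\e|^2)$. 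At a maximum point of $v$ over $[t,T]\times\T^d\times S^{d-1}$ we therefore get $\tfrac\theta2 v^2\le C(1+\|Du_\e\|_\infty^2)+\|D^2f\|_\infty$. The terminal value is bounded by $\|D^2g\|_\infty$.

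The main obstacle is that this bound uses $\|Du_\e\|_\infty$, so the argument is circular with Step 1. I would break the circularity through the control representation, which gives the Lipschitz bound independently: the optimal feedback is bounded by a Gronwall argument on the adjoint, using the quadratic growth of $D_xH$ together with $\|Dg\|_\infty$ and $\|Df\|_\infty$. Alternatively, one can follow the standard proof in \cite{CardaliaguetPorretta}, where the Lipschitz bound is obtained first from the representation formula and the semiconcavity second.

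\textbf{Step 3: the $L^1$ bound on $\Delta u_\e$.} Since $\Delta u_\e\le dC$ and $\int_{\T^d}\Delta u_\e\,\de x=0$ by periodicity, we have $\int(\Delta u_\e)^+\le dC$. Hence
\[
\|\Delta u_\e\|_{L^1}=2\int(\Delta u_\e)^+\le 2dC=:K .
\]
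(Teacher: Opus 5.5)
You correctly see that (H2)--(H3) make the maximum-principle semiconcavity bound in Step 2 depend on $\|Du_\e\|_\infty$. The gap is in how you break this circularity.

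The paper gives no argument here beyond citations: semiconcavity via duality or the maximum principle, Lipschitz via Bernstein, and the $L^1$ bound on $\Delta u_\e$ as in Lions. Your Step 2 computation is correct once an $\e$-uniform Lipschitz bound is available. Step 3 is exactly the standard argument.

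The problems in Step 1 are these:
\begin{itemize}
\item ``Optimal controls are bounded'' is not an independent input. The optimal feedback is $-D_pH(x,Du_\e)$, so by (H4) its boundedness is equivalent to the Lipschitz bound you want.
\item The Gronwall argument on the costate $p_s=Du_\e(s,X_s)$ does not close under (H1). With $|D_xH(x,p)|\le C_H(1+|p|^2)$ you only get
\[
|p_s|\le \|Dg\|_\infty+\int_s^T\big(\|Df\|_\infty+C_H(1+|p_r|^2)\big)\,\de r .
\]
This is a Riccati-type inequality whose comparison solution is of $\tan$ type and blows up in finite time. So it bounds $p_s$ only for $T-s$ small, not on all of $[0,T]$.
\item For $\e>0$ the costate solves a backward SDE, so a pathwise Gronwall argument is not available in the first place.
\item Your other option, semiconcavity first and then Lipschitz from the oscillation bound, is circular with Step 2 as you wrote it.
\end{itemize}

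The missing observation is that comparing controls needs only an $L^2$ bound on the optimal control, and that bound comes from coercivity. Set $L(x,\alpha)=\sup_p(-\alpha\cdot p-H(x,p))$. Then (H1) and (H4) give $|D_xL(x,\alpha)|\le C(1+|\alpha|^2)$ and $L(x,\alpha)\ge\frac{1}{4\Theta}|\alpha|^2-C$. Take $\alpha$ $\delta$-optimal for $(t,x)$ and use it from $y$ with the same Brownian path. This gives
\[
u_\e(t,y)-u_\e(t,x)\le \delta+\mathsf{d}(x,y)\Big(C\,\E\int_t^T(1+|\alpha_s|^2)\,\de s+T\|Df\|_\infty+\|Dg\|_\infty\Big).
\]
Coercivity together with your $L^\infty$ bound on $u_\e$ gives $\E\int_t^T|\alpha_s|^2\,\de s\le C$. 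This yields the $\e$-uniform Lipschitz bound, and then your Step 2 closes.

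Two equivalent repairs:
\begin{itemize}
\item Get semiconcavity directly from the representation formula by shifting the trajectory by $\pm h$, using $|D^2_{xx}L|\le C(1+|\alpha|^2)$ (which follows from (H2)--(H4)). Then apply your oscillation argument.
\item Run Bernstein on $e^{-\lambda u_\e}|Du_\e|^2$ with $\lambda$ small and $\e\le 1$. The inequality $D_pH(x,p)\cdot p-H(x,p)\ge\frac\theta2|p|^2-C$ produces a good quartic term that dominates the cubic term coming from (H1). This is in the spirit of the Bernstein argument the paper cites.
\end{itemize}
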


\begin{proof}
The semiconcavity estimates independent of the viscosity $\e$ can be found in \cite{CGsima,CGM,CardaliaguetPorretta} using duality methods or maximum principle techniques. The $L^1$ estimate on the trace can be found in \cite[p.173]{L82Book}. The Lipschitz estimate, obtained via the Bernstein method, can be found in \cite{CardaliaguetPorretta}.
\end{proof}

The next is the main result needed to provide rate of convergence estimates for the HJ equation.

\begin{thm}\label{mainMFG} Let $(u_\e,m_\e)$ and $(u,m)$ be the solutions of the viscous and the inviscid problems respectively. Then
\[
\iint_{Q_T} (F[m_\e](t,x)-F[m](t,x))(m_\e-m)\,\de x\de t\leq C_1\e,
\]
\[
\theta\iint_{Q_T} (m_\e+m)|Du_\e-Du|^2\,\de x\de t\leq C_2\e,
\]
\[
\int_{\T^d}(G(x,m_\e(T))-G(x,m(T)))(m_\e(T,x)-m(T,x))\,\de x\leq C_3\e.
\]
\end{thm}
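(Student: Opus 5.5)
The plan is the Lasry--Lions duality argument, with the inviscid pair $(u,m)$ playing the role of the second solution. I will test the difference of the Hamilton--Jacobi equations against the difference of the densities, and the difference of the Fokker--Planck equations against the difference of the value functions. The only new term compared with the uniqueness proof is the viscous residual, and it will be bounded through the uniform semiconcavity of Lemma \ref{semic}.

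Concretely, I will compute
\[
\frac{d}{dt}\int_{\T^d}(u_\e-u)(m_\e-m)\,\de x
\]
on $(0,T)$ and integrate in time. This is justified by regularizing $u$, e.g.\ by sup-convolution or by using $u$ Lipschitz and semiconcave, with $m\in L^\infty$ a weak solution of \eqref{eq:ce}. The boundary terms are handled as follows. At $t=0$ the term vanishes since $m_\e(0)=m(0)=m_0$. At $t=T$ it equals $\int (G(x,m_\e(T))-G(x,m(T)))(m_\e(T)-m(T))\,\de x$.

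The bulk term is obtained by using both equations and integrating by parts. It should produce, with the opposite sign,
\[
\iint_{Q_T}(F[m_\e]-F[m])(m_\e-m) + \iint_{Q_T} m_\e\,\big[H(x,Du)-H(x,Du_\e)-D_pH(x,Du_\e)\cdot(Du-Du_\e)\big] + \iint_{Q_T} m\,\big[H(x,Du_\e)-H(x,Du)-D_pH(x,Du)\cdot(Du_\e-Du)\big],
\]
plus the viscous remainder. By \eqref{H4}, each bracket is at least $\frac{\theta}{2}|Du_\e-Du|^2$.

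The viscous remainder comes from $\e\Delta u_\e$ tested against $m_\e-m$ and $\e\Delta m_\e$ tested against $u_\e-u$. The pieces $\e\int \Delta u_\e\, m_\e$ and $\e\int \Delta m_\e\, u_\e$ cancel after integration by parts. What survives is $\pm\e\iint \Delta u_\e\, m$ and $\pm\e\iint \Delta m_\e\, u = \pm\e\iint m_\e\,\Delta u$, where the latter is understood as $\e\int u\,\Delta m_\e$.

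These two terms are the crux. By Lemma \ref{semic}, $\Delta u_\e\le Cd$. Since $u$ is semiconcave as the uniform limit, $\Delta u\le Cd$ in the sense of distributions. Because $m,m_\e\ge0$ are probability densities, each term is bounded on the favourable side by $Cd\,\e T$. I must check that the signs line up so that both residual terms appear as $\e\iint m\,\Delta u_\e$ and $\e\iint m_\e\,\Delta u$ on the right-hand side. I expect this, since they mirror the structure $-\partial_t u_\e-\e\Delta u_\e$ versus $-\partial_t u$. If a sign turns out unfavourable, I would instead use $\|\Delta u_\e\|_{L^1}\le K$ together with $m\in L^\infty$. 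Obtaining $L^\infty$ for $m$ is standard from semiconcavity, since $\dive(D_pH(x,Du))$ is bounded above.

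The resulting identity has the form
\[
\text{(monotone $G$ term)}+\iint(F[m_\e]-F[m])(m_\e-m)+\frac{\theta}{2}\iint(m_\e+m)|Du_\e-Du|^2\le C\e .
\]
Since all three terms on the left are nonnegative by \eqref{uniqueF}, \eqref{uniqueG} and \eqref{H4}, each one is bounded separately by $C\e$, which gives all three claims. The main obstacle is the rigorous justification of the duality computation with the nonsmooth pair $(u,m)$. This includes the distributional meaning of $\int m_\e\,\Delta u$, where $m_\e$ is smooth so the pairing is fine, and of $\partial_t u$ paired with $m_\e-m$. I would handle this by mollifying $u$ in space, using that mollification preserves semiconcavity and perturbs the Hamilton--Jacobi equation by an error vanishing with the mollification parameter. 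Convexity of $H$ makes the mollified $u$ a subsolution up to that error, and I would then pass to the limit.
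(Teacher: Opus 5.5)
Your strategy is the same Lasry--Lions duality computation the paper uses, but the step you call the crux fails as stated. Done carefully, the viscous remainder on the right-hand side is
\[
\e\iint_{Q_T} m\,\Delta u_\e\,\de x\de t-\e\iint_{Q_T} m_\e\,\Delta u\,\de x\de t .
\]
This is exactly the paper's $(\e-\eta)\iint(\Delta u_\e m_\eta-\Delta u_\eta m_\e)$ with $\eta=0$. So the signs do not line up as you expect. The first term is indeed $\le Cd\,\e T$ by Lemma \ref{semic}. The second carries a minus sign, and $\Delta u\le Cd$ only bounds it from below. This is not a technicality: semiconcave viscosity solutions typically have concave kinks (think of $-|x|$), where $\Delta u$ has a negative singular part, and then $-\e\iint m_\e\,\Delta u$ is genuinely positive. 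Your fallback targets the wrong term, since the pair $(\Delta u_\e, m)$ is the harmless one.

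What you need is a two-sided bound on the unfavourable pairing. Semiconcavity together with $\int_{\T^d}\Delta u(t)=0$ shows that $\Delta u(t)$ is a measure of total variation at most $2Cd$; alternatively, pass to the limit in $\|\Delta u_\e\|_{L^1}\le K$. The maximum principle gives $\|m_\e\|_{L^\infty(Q_T)}\le C$ uniformly in $\e$, because $\dive(D_pH(x,Du_\e))$ is bounded above by Lemma \ref{semic}, \eqref{H3} and \eqref{H4}; you noted this only for $m$. Then $-\e\iint m_\e\,\Delta u\le C\e$. This is precisely the paper's H\"older step, which bounds both remainder terms by $L^1$ norms of Laplacians times $L^\infty$ norms of densities, without using signs.

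The paper also sidesteps your justification issue. It compares two viscous solutions $(u_\e,m_\e)$ and $(u_\eta,m_\eta)$, where everything is smooth, and then lets $\eta\to0$ using the known qualitative convergence. Your direct route is workable, but a one-sided subsolution inequality for the mollified $u$ is not enough, since the test function $m_\e-m$ changes sign. Use instead that the Lipschitz function $u$ satisfies the equation a.e.; equivalently, the mollification error tends to $0$ a.e.\ and boundedly, which suffices against the bounded densities $m,m_\e$. Since $m_\e$ is smooth, $\int m_\e\,\de\Delta u(t)$ is then a well-defined pairing with a measure.
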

\begin{proof}
Consider the differences $w=u_\e-u_{\eta}$ and $\mu=m_\e-m_\eta$, $0<\eta\leq\e$. They solve the evolutions
\[
-\partial_t w-\eta\Delta w+H(x,Du_\e)-H(x,Du_\eta)=(\e-\eta)\Delta u_\e+F[m_\e]-F[m_\eta].
\]
\[
\partial_t \mu-\eta\Delta \mu-\mathrm{div}(m_\e D_pH(x,Du_\e))+\mathrm{div}(m_\eta D_pH(x,Du_\eta))=(\e-\eta)\Delta m_\e.
\]
We use the following bound which is granted by the uniform convexity of $H$
\[
H(x,p_1)-H(x,p_2)-D_pH(x,p_2)\cdot (p_1-p_2)\geq \theta |p_1-p_2|^2.
\]
First, we note that by the above assumption
\begin{align*}
&\iint_{Q_T} m_\e(t,x)(H(x,Du_\eta)-H(x,Du_\e)-D_pH(x,Du_\e)\cdot (-Dw))\,\de x\de t\\
&+\iint_{Q_T}m_\eta(t,x)(H(x,Du_\e)-H(x,Du_\eta)-D_pH(x,Du_\eta)\cdot Dw)\,\de x\de t\\
&\geq \theta\iint_{Q_T} (m_\e(t,x)+m_\eta(t,x))|Du_\e-Du_\eta|^2\,\de x\de t.
\end{align*}
Therefore, testing the first equation against $-\mu$, the second by $w$ and summing we find
\begin{align*}
\int_{\T^d}\mu(T,x)w(T,x)\,\de x&+\theta\iint_{Q_T} (m_\e+m_\eta)|Du_\e-Du_\eta|^2\,\de x\de t\\
&+\iint_{Q_T} (F[m_\e](t,x)-F[m_\eta](t,x))(m_\e-m_\eta)\,\de x\de t\\
&\leq (\e-\eta)\iint_{Q_T} (-\Delta u_\e m_\e+\Delta m_\e u_\e+\Delta u_\e m_\eta-\Delta m_\e u_\eta)\,\de x\de t\\
&= (\e-\eta)\iint_{Q_T} (\Delta u_\e m_\eta-\Delta u_\eta m_\e)\,\de x\de t,
\end{align*}
where the last equality follows after integrating by parts the rightmost term. Applying the H\"older's inequality we find
\begin{align*}
\theta\iint_{Q_T} (m_\e+m_\eta)|Du_\e-Du_\eta|^2\,\de x\de t&+\iint_{Q_T}(F[m_\e](t,x)-F[m_\eta](t,x))(m_\e-m_\eta)\,\de x\de t\\
&\leq (\e-\eta)[\|\Delta u_\eta\|_{L^1(Q_T)}\|m_\e\|_{L^\infty(Q_T)}+\|\Delta u_\e\|_{L^1(Q_T)}\|m_\eta\|_{L^\infty(Q_T)}].
\end{align*}
By the maximum principle for the Fokker-Planck equation, since both $[-\mathrm{div}(D_pH(x,Du_\eta))]^-$ and $[-\mathrm{div}(D_pH(x,Du_\e))]^-$ are bounded independently of $\e,\eta$ due to the semiconcavity bounds and \eqref{H4}, cf. \cite[eq. (46) p. 132]{EvansBook}, we have $\|m_\e\|_{L^\infty},\|m_\eta\|_{L^\infty}\leq C\|m(0)\|_{L^\infty}$, with $C$ independent of $\e,\eta$, see \cite{EvansBook, LBL}, and $\|\Delta u_\eta\|_{L^1},\|\Delta u_\e\|_{L^1}<\infty$ by Lemma \ref{semic}. These imply in particular
\[
\iint_{Q_T} (F[m_\e](t,x)-F[m_\eta](t,x))(m_\e-m_\eta)\,\de x\de t\leq C_1(\e-\eta).
\]
We conclude by letting $\eta\to0$. Going back to the previous estimates, we get
\[
\theta\iint_{Q_T} (m_\e+m_\eta)|Du_\e-Du_\eta|^2\,\de x\de t\leq C_2\e
\]
and
\[
\int_{\T^d}\mu(T,x)w(T,x)\,\de x\leq C_3\e.
\]
\end{proof}
We now specialize the previous result to obtain some estimates in sup-norm and $L^1$ of the variation $ \partial_\e F[m_\e]$. The first assumption we impose is related to a $L^\infty$ rate of convergence for $u_\e-u$.\\

(F1): We assume that there exists $C_F$ such that for any $\e>0$, if $m_1,m_2$ are probability measures such that
\[
\int_{\T^d}(F[m_1(t,\cdot)]-F[m_2(t,\cdot)](m_1(t,\cdot)-m_2(t,\cdot))\,\de x\leq \tilde K\e,
\]
then
\[
\sup_{\T^d}|F[m_1(t,\cdot)]-F[m_2(t,\cdot)]|\leq C_F\sqrt\e.
\]
We assume the same hypothesis if we replace the coupling $F$ with the terminal datum $u_T$, which depends on $m$ through $G$.
\begin{cor}\label{F1}
If (F1) holds, we have
\[
\sup_{x\in \T^d}|F[m_\e(t,\cdot)]-F[m(t,\cdot)]|\leq C_F\sqrt{\e},
\]
or, in other words, 
\[
\|\partial_\e F[m_\e]\|_{L^\infty(\T^d)}\leq \frac{C_F}{2\sqrt{\e}}.
\]
\end{cor}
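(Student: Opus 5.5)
The plan is to combine the first estimate of Theorem \ref{mainMFG} with hypothesis (F1). The main subtlety is that (F1) is stated pointwise in time, while Theorem \ref{mainMFG} controls only the space-time integral
\[
\iint_{Q_T} (F[m_\e]-F[m])(m_\e-m)\,\de x\de t\leq C_1\e .
\]
So the first step is to pass from the integrated bound to a bound at each fixed $t$.

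To do this, I would rerun the duality argument in the proof of Theorem \ref{mainMFG} on the time interval $(t,T)$ rather than $(0,T)$. Test the equation for $w=u_\e-u_\eta$ against $-\mu$ and the equation for $\mu=m_\e-m_\eta$ against $w$, integrate over $(t,T)\times\T^d$, and sum. Then boundary terms appear at both $T$ and $t$, and the time-$t$ term is $\int_{\T^d}\mu(t)w(t)\,\de x$. This term is controlled as follows:
\begin{itemize}
\item $|w(t)|\lesssim \sqrt{\e}$ in sup norm, by the known HJ rate for HJ equations with bounded Lipschitz right-hand sides (Evans' integral method, using the uniform bounds of Lemma \ref{semic});
\item $\mu$ is bounded in $L^1$.
\end{itemize}
This bound is however only $\sqrt\e$, so it is not sufficient by itself.

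Because of that weakness, the preferred route is to read the \emph{Lasry--Lions} quantity as monotone. Under the monotonicity \eqref{uniqueF}, I would assume the standard form of nonlocal coupling, in which monotonicity holds pointwise in $t$, i.e.\ $\int_{\T^d}(F[m_1(t)]-F[m_2(t)])\,d(m_1(t)-m_2(t))\ge0$ for each $t$. Then the integrand $g(t)=\int_{\T^d}(F[m_\e(t)]-F[m(t)])(m_\e(t)-m(t))\,\de x$ is nonnegative and $\int_0^T g\le C_1\e$. In this reading (F1) should be understood with $\tilde K$ absorbing the time integration, namely applied to the pair $(m_1,m_2)=(m_\e,m)$ in the integrated sense in which it is intended.

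If a truly pointwise-in-$t$ bound is needed, the approach would be the localized duality argument on $(t,T)$ from the first step. That argument yields $g$ integrated over $(t,T)$ plus the boundary term bounded by $C\e$, which gives the needed bound uniformly in $t$. This interpretation step, reconciling the pointwise hypothesis with the integral estimate, is the expected main obstacle.

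Once the hypothesis of (F1) is verified with $\tilde K=C_1$ (or the localized constant), (F1) gives directly
\[
\sup_{\T^d}|F[m_\e(t,\cdot)]-F[m(t,\cdot)]|\le C_F\sqrt\e .
\]
For the derivative formulation, I would interpret $\partial_\e F[m_\e]$ through the difference quotient $\frac{F[m_\e]-F[m]}{\e}$. Heuristically, $\varphi(\e):=F[m_\e]-F[m]$ satisfies $|\varphi(\e)|\le C_F\sqrt\e$, which matches $\varphi(\e)=\int_0^\e\partial_s\varphi\,ds$ with $|\partial_s\varphi|\le C_F/(2\sqrt s)$. To make this rigorous, I would apply the estimate with $\eta$ in place of $0$, as in the proof of Theorem \ref{mainMFG}, which yields $|F[m_\e]-F[m_\eta]|\le C_F\sqrt{\e-\eta}$. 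Dividing by $\e-\eta$ gives only a Hölder bound, so the statement "in other words" should be read as the integrated identity above, and I would present it as that equivalent reformulation.
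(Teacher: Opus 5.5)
Your main line is the paper's argument. The corollary has no separate proof in the paper; it is simply the first estimate of Theorem \ref{mainMFG} fed into (F1), with $(m_1,m_2)=(m_\e,m)$ and $\tilde K=C_1$. The paper tacitly uses the time-integrated reading of (F1) that you adopt. This is consistent with $F$ acting on whole curves in $C([0,T];\mathcal{P}(\T^d))$ and with \eqref{uniqueF} being integrated in time. Under that reading your added assumption of pointwise-in-time monotonicity is not needed. The mismatch with the fixed-$t$ wording of (F1) that you point out is real, and the paper does not address it either.

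What you should not claim is that the localized duality argument on $(t,T)$ verifies the literal fixed-$t$ hypothesis of (F1). It fails for three reasons.
\begin{enumerate}
\item The Lasry--Lions identity on $(t,T)$ only controls $\int_t^T g(s)\,\de s$, never $g(t)$ itself. If $g\ge 0$, this is already implied by $\int_0^T g\le C_1\e$, so localizing gains nothing. A nonnegative function with small integral can still exceed $\tilde K\e$ at individual times.
\item Your bound on the boundary term $\int_{\T^d}\mu(t)w(t)\,\de x$ relies on $\|w(t)\|_{L^\infty}\lesssim\sqrt\e$ from a standard Hamilton--Jacobi rate. But $u_\e$ and $u$ solve equations with different sources $F[m_\e]$, $F[m]$ and different terminal data. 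Evans' argument plus comparison only gives $\|w(t)\|_{L^\infty}\le C\sqrt\e+\|G(\cdot,m_\e(T))-G(\cdot,m(T))\|_{L^\infty}+\int_t^T\|F[m_\e]-F[m]\|_{L^\infty}\,\de s$. This is circular, and indeed the paper obtains $\|u_\e-u\|_{L^\infty}\lesssim\sqrt\e$ only \emph{after} this corollary.
\item You first say this boundary term is only $O(\sqrt\e)$, and later that it is bounded by $C\e$.
\end{enumerate}
So drop that paragraph. State plainly that the proof holds under the integrated reading of (F1), which is exactly the paper's implicit reading.

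On the second formulation you are more careful than the paper. Neither $|F[m_\e]-F[m]|\le C_F\sqrt\e$ nor the incremental bound $|F[m_\e]-F[m_\eta]|\le C\sqrt{\e-\eta}$ implies $\|\partial_\e F[m_\e]\|_{L^\infty}\le C_F/(2\sqrt\e)$. The latter only gives $1/2$-H\"older continuity in $\e$. The paper writes ``in other words'' and then uses the derivative bound as a genuine estimate in Step 1 of the proof of the main theorem, integrating in $\e$. Reading it as a heuristic reformulation, as you do, is the correct interpretation. Be aware that a pointwise bound on $\partial_\e F[m_\e]$ is then an additional assumption, not a consequence of (F1).
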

We now weaken (F1) to prove a $L^1$ rate of convergence for $u_\e-u$, by assuming the following:\\

(F2): There exists $C_F$ such that for any $\e>0$, if $m_1,m_2$ are probability measures such that
\[
\int_{\T^d} (F[m_1(t,\cdot)]-F[m_2(t,\cdot)](m_1(t,\cdot)-m_2(t,\cdot))\,\de x\leq \tilde K\e,
\]
then
\[
\int_{\T^d}|F[m_1(t,\cdot)]-F[m_2(t,\cdot)]|\,\de x\leq C\e^\frac12.
\]
We assume the same hypothesis if we replace the coupling $F$ with the terminal datum $u_T$, which depends on $m$ through $G$.\\

\begin{cor}\label{F2}
If (F2) holds, we have
\[
\int_{\T^d}|F[m_\e(t,\cdot)]-F[m(t,\cdot)]|\,\de x\leq C_F\sqrt{\e},
\]
or, in other words, 
\[
\|\partial_\e F[m_\e]\|_{L^1(\T^d)}\leq \frac{C_F}{2\sqrt{\e}}.
\]
\end{cor}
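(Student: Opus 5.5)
The plan is to combine the first estimate of Theorem \ref{mainMFG} with assumption (F2), mirroring the route from Theorem \ref{mainMFG} to Corollary \ref{F1}.

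\textbf{Step 1.} By Theorem \ref{mainMFG},
\[
\iint_{Q_T}(F[m_\e]-F[m])(m_\e-m)\,\de x\de t\le C_1\e .
\]
Under the monotonicity assumption \eqref{uniqueF} this quantity is nonnegative.

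\textbf{Step 2.} Hypothesis (F2) is stated for each fixed time $t$, so the space-time bound has to be localized in time. For couplings that are local in time (the model case $F[m](t,x)=f(x,m(t))$ with $f$ monotone), the integrand
\[
\phi(t):=\int_{\T^d}(F[m_\e(t)]-F[m(t)])(m_\e(t)-m(t))\,\de x
\]
is nonnegative for every $t$. Applying the argument in the proof of Theorem \ref{mainMFG} on the interval $(t,T)$ in place of $(0,T)$ gives $\int_t^T\phi\le C\e$ uniformly in $t$. If a pointwise-in-time bound is needed, I would argue as follows. Duality on $(t,T)$ produces the boundary term $\int_{\T^d}\mu(t)w(t)\,\de x$. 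This term is bounded by $\|w(t)\|_\infty\|\mu(t)\|_{L^1}$, which is controlled using the Lipschitz and $L^\infty$ bounds of Lemma \ref{semic} and the maximum-principle bound on $m_\e$. Alternatively, one can read (F2) as applying with $\tilde K$ absorbing $T$, interpreting the hypothesis in the integrated-in-time sense exactly as the paper does for Corollary \ref{F1}. In either reading, $m_\e(t)$ and $m(t)$ are probability measures satisfying the premise of (F2) with $\tilde K=C_1$ (or $C_1/T$ after choosing constants).

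\textbf{Step 3.} Invoking (F2) gives
\[
\int_{\T^d}|F[m_\e(t)]-F[m(t)]|\,\de x\le C_F\sqrt\e .
\]
For the derivative formulation, interpret $\|\partial_\e F[m_\e]\|_{L^1}\lesssim \e^{-1/2}$ as the statement that $\e\mapsto F[m_\e]$ has $L^1$-modulus $C_F\sqrt{\e}$, since $\frac{d}{d\e}\sqrt\e=\frac1{2\sqrt\e}$. The same argument applies to $\eta\to\e$ by replacing $m$ with $m_\eta$ in Theorem \ref{mainMFG}, whose proof gives the bound $C_1(\e-\eta)$. The terminal datum $G$ is handled identically using the third estimate of Theorem \ref{mainMFG}.

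\textbf{Main obstacle.} The difficulty is Step 2, passing from the space-time integrated monotonicity bound to the fixed-time premise of (F2). I would first try to exploit the time-local structure and the nonnegativity of $\phi(t)$. Failing that, I would fall back on the integrated interpretation of (F2).
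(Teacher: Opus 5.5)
Your main line, inserting the first estimate of Theorem \ref{mainMFG} into (F2), is the paper's entire argument. The corollary is stated without further proof. The paper tacitly takes the space--time bound $\iint_{Q_T}(F[m_\e]-F[m])(m_\e-m)\le C_1\e$ as the premise of (F2), and it presents the derivative form as a rephrasing. Under your ``integrated'' reading of (F2) you therefore reproduce the paper, and the proof is complete.

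\textbf{The gap.} What does not work is your attempt to verify the literal fixed-$t$ premise, so the claim that the premise holds ``in either reading'' is not justified. Running the duality of Theorem \ref{mainMFG} on $(t,T)$ creates the boundary term $\int_{\T^d}\mu(t)w(t)\,\de x$. It vanishes at $t=0$ because $\mu(0)=0$, but not for $t>0$. The uniform bounds you cite give only $|\int_{\T^d}\mu(t)w(t)\,\de x|\le 2\|w(t)\|_{L^\infty}=O(1)$. Making it small would need a rate for $u_\e-u_\eta$ or $m_\e-m_\eta$ at time $t$, which is what this corollary is supposed to feed into. Even an $O(\sqrt\e)$ bound would not give $\tilde K\e$.

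More fundamentally, such identities only control time integrals of $\phi$, never $\phi(t)$ itself. From $\phi\ge0$ and $\int_0^T\phi\le C_1\e$ you get $\phi\le\lambda\e$ only off a set of times of measure at most $C_1/\lambda$. The bound $\int_t^T\phi\le C_1\e$ holds trivially by positivity and adds nothing.

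\textbf{How to get an honest statement.} Use (F2) in the structural form that the model coupling $F[m]=f(x,k\star m)\star k$ actually satisfies, namely $\sup_x|F[m_1(t)]-F[m_2(t)]|\le C\,\phi(t)^{1/2}$. This uses $k$ symmetric and $f_z\ge c>0$ on the compact range of $k\star m$. Cauchy--Schwarz in time then gives
\[
\int_0^T\|F[m_\e(t)]-F[m(t)]\|_{L^1(\T^d)}\,\de t\le C\sqrt{T C_1\e}.
\]
This is a space--time bound, of the same type as the quantity entering the duality argument for \eqref{u1}. A genuinely pointwise-in-$t$ bound would instead need time equicontinuity of $\phi$ uniform in $\e$, and would lose a power of $\e$.

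\textbf{Step 3.} The two displays are not equivalent. An increment bound $C_F\sqrt\e$ does not control $\partial_\e F[m_\e]$; only the converse holds, by integrating in $\e$. Your $\eta\to\e$ variant gives $\|F[m_\e]-F[m_\eta]\|_{L^1}\le C\sqrt{\e-\eta}$. The difference quotients are then of size $(\e-\eta)^{-1/2}$, which blow up as $\eta\to\e$ instead of staying below $C_F/(2\sqrt\e)$. Reading the derivative form as the modulus statement, as you do, is consistent with the paper's ``in other words''. It is still a reinterpretation: the derivative bound is strictly stronger, and this argument does not deliver it.
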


\begin{rem}
Assumptions \eqref{uniqueF}-\eqref{uniqueG} are satisfied by nonlocal couplings given by the convolution with smooth symmetric kernels; see \cite[Example 4.1]{CannarsaCapuani}. As far as (F1)-(F2) are concerned, as noted in \cite[Remark 7.1]{TangZhang}, one can take for instance $F[m]=f(x,k\star m)\star k$, where $k\geq0$ is a smooth symmetric kernel, $f=f(x,z)$ is $C^1(\T^d\times\R)$ and satisfying for all $(x,z)\in \T^d\times\R$ that $f_z(x,z)>0$. Then, (F1) and (F2) hold.
\end{rem}

The next is the main result of the paper:
\begin{thm}
Let $(u_\e,m_\e)$ be a solution of \eqref{eq:mfg viscous}, and assume \eqref{Fm}, \eqref{H1}-\eqref{H4}, \eqref{uniqueF}-\eqref{uniqueG}. If (F1) holds, we have
\begin{equation}\label{u}
\|u_\e-u\|_{L^\infty(Q_T)}\leq C\sqrt{\e}.
\end{equation}
If, instead, (F2) holds, then
\begin{equation}\label{u1}
\|u_\e-u\|_{L^\infty(0,T;L^1(\T^d))}\leq C\sqrt{\e}.
\end{equation}
In both cases, we conclude
\begin{equation}\label{DpH}
\|D_pH(\cdot,Du_\e)-D_pH(\cdot,Du)\|_{L^\infty(0,T; L^2(\T^d))}\leq C\e^\frac14.
\end{equation}
If, in addition
\begin{equation}\label{C11}
u_\e\in L^\infty((0,T);W^{2,\infty}(\T^d))\text{ independently of $\e$},
\end{equation}
we have the quantitative bound
\begin{equation}\label{rate density}
W_2(m_\e,m)\leq C\e^\frac18.
\end{equation}
\end{thm}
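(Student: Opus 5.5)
The strategy is to decouple the system. Theorem \ref{mainMFG} together with Corollaries \ref{F1}--\ref{F2} controls the coupling terms. After that, the HJ equations for $u_\e$ and $u$ can be compared as two Hamilton-Jacobi equations whose right-hand sides differ by $O(\sqrt\e)$. The resulting rate on $u_\e-u$ is then upgraded to a gradient rate, and the gradient rate is fed into Corollary \ref{cor:convergenza fp}.

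\textbf{Rates for $u_\e-u$.} Under (F1), write $f_\e(t,x):=F[m_\e](t,x)$ and $f(t,x):=F[m](t,x)$. Introduce the auxiliary viscosity solution $v$ of $-\partial_t v+H(x,Dv)=f_\e$ with terminal datum $v(T)=G(x,m_\e(T))$. By (F1) and Corollary \ref{F1}, applied also to $G$ through the third estimate of Theorem \ref{mainMFG}, we have $\|f_\e-f\|_\infty\le C_F\sqrt\e$ and $\|G(\cdot,m_\e(T))-G(\cdot,m(T))\|_\infty\le C\sqrt\e$. The comparison principle for first-order HJ equations then gives $\|v-u\|_\infty\le C(1+T)\sqrt\e$.

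It remains to bound $u_\e-v$, i.e. the classical vanishing viscosity problem for a single HJ equation with a fixed source $f_\e$. Here the source is uniformly bounded in $L^\infty_t W^{2,\infty}_x$ by \eqref{Fm}, and the terminal datum is in $W^{2,\infty}$. Semiconcavity (Lemma \ref{semic}) together with the Crandall--Lions/Evans doubling or integral argument yields $\|u_\e-v\|_\infty\le C\sqrt\e$. One sign is even $O(\e)$, since $\e\Delta u_\e\le C\e$ makes $u_\e+C\e(T-t)$ a supersolution.

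Under (F2), the same comparison is carried out in $L^1$. The $L^1$-contraction of first-order HJ equations fails in general. Instead, I would use the linearized (adjoint) equation: $w=v-u$ solves a transport inequality with a drift interpolating $D_pH$. Testing against the solution of the dual continuity equation with a bounded initial datum $\phi(x)=\mathrm{sgn}\,w(t_0,x)$, whose density remains bounded thanks to semiconcavity (one-sided bound on the divergence of the drift, as in the proof of Theorem \ref{mainMFG}), gives $\int|w(t_0)|\le C\int_{t_0}^T\|f_\e-f\|_{L^1}+C\|G_\e-G\|_{L^1}\le C\sqrt\e$.

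\textbf{Gradient rate \eqref{DpH}.} The weighted estimate $\theta\iint(m_\e+m)|Du_\e-Du|^2\le C_2\e$ from Theorem \ref{mainMFG} is useless where $m$ vanishes, so I would instead use an interpolation exploiting semiconcavity. For fixed $t$, both $u_\e(t)$ and $u(t)$ are Lipschitz and semiconcave with constants independent of $\e$, so $\Delta(u_\e-u)$ is a measure with uniformly bounded total variation by Lemma \ref{semic}. Integration by parts on $\T^d$ then gives
\[
\int_{\T^d}|Du_\e-Du|^2\,\de x=-\int_{\T^d}(u_\e-u)\,\Delta(u_\e-u)\le \|u_\e-u\|_{L^\infty}\,\|\Delta(u_\e-u)\|_{\mathcal M}\le C\sqrt\e .
\]
In the $L^1$ case I would use $\|u_\e-u\|_{L^1}$ with a mollification/semiconcavity interpolation, $\|Dw\|_{L^2}^2\lesssim\|w\|_{L^1}^{1/2}$-type or better. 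Then \eqref{H4} gives $|D_pH(x,Du_\e)-D_pH(x,Du)|\le\Theta|Du_\e-Du|$, whence \eqref{DpH} with rate $\e^{1/4}$.

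\textbf{Density rate \eqref{rate density}.} Under \eqref{C11}, the drifts $b_\e=-D_pH(x,Du_\e)$ and $b$ (limit of $b_\e$, hence also Lipschitz) satisfy \eqref{ipotesi B} and \eqref{ipotesi Lip e} uniformly, with $M$ coming from the Lipschitz bound of Lemma \ref{semic}. Corollary \ref{cor:convergenza fp} then gives $W_2(m_\e,m)\le C(\sqrt\e+\|b_\e-b\|_{L^1}^{1/2})\le C(\sqrt\e+\e^{1/8})$, using $\|\cdot\|_{L^1}\le\|\cdot\|_{L^2}$ on the torus. Uniqueness of $m$ follows from \eqref{uniqueF}--\eqref{uniqueG}.

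The main obstacle I expect is the single-equation bound $\|u_\e-v\|\le C\sqrt\e$, and especially its $L^1$ counterpart under (F2). These require handling the $\e$-dependent source $f_\e$ with constants uniform in $\e$ and, in the $L^1$ case, a duality argument replacing the missing $L^1$-contraction.
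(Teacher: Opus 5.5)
Your treatment of \eqref{u}, \eqref{u1}, of \eqref{DpH} under (F1), and of \eqref{rate density} under (F1) is correct. For the Hamilton--Jacobi rates you take a genuinely different route from the paper. The one real gap is \eqref{DpH} under (F2).

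\textbf{Comparison of routes.} The paper runs Evans' argument on $\partial_\e u_\e$. It represents $\partial_\e u_\e$ by duality against the viscous adjoint equation \eqref{adjoint}, bounds the viscous term through the Bernstein estimate $2\e\iint|D^2u_\e|^2\rho_\e\le C$ and the coupling term through $\|\partial_\e F[m_\e]\|\lesssim\e^{-1/2}$, and then integrates in $\e$. You instead freeze the coupling and split $u_\e-u=(u_\e-v)+(v-u)$. The first piece is a single-equation vanishing-viscosity rate, needing only uniform Lipschitz bounds on the frozen data. The second is comparison (resp.\ $L^1$ duality) in the source.

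What your route buys is that you only use the difference bounds of Corollaries~\ref{F1}--\ref{F2}. The paper needs a bound on $\partial_\e F[m_\e]$, which it presents as a rephrasing of those corollaries, but it does not follow from them: Theorem~\ref{mainMFG} for the pair $\e,\eta$ together with (F1) gives at best $\|F[m_\e]-F[m_\eta]\|_\infty\lesssim\sqrt{\e-\eta}$. The paper's route, in turn, keeps all duality at the viscous level with smooth coefficients, and shows that in $L^1$ the pure viscosity error is $O(\e)$.

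In your $L^1$ step the drift $\int_0^1D_pH(x,sDv+(1-s)Du)\,\de s$ is only BV, so the bounded dual density has to be built by approximation. For instance, replace $u,v$ by viscous approximations with the same frozen sources; their drift is smooth, with divergence bounded above uniformly by semiconcavity, \eqref{H3} and \eqref{H4}. This is routine. There is also a sign slip: $\e\Delta u_\e\le C\e$ makes $u_\e-C\e(T-t)$ a \emph{subsolution} of the inviscid equation, so the one-sided $O(\e)$ bound is $u_\e-v\le C\e(T-t)$. Your supersolution claim for $u_\e+C\e(T-t)$ would need semiconvexity.

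\textbf{The gap.} Under (F2) you only have an $L^1$ rate for $w=u_\e-u$. The interpolation you suggest, done carefully by mollifying at scale $\delta$, reads $\|Dw\|_{L^2}^2\le\|Dw\|_{L^\infty}\|Dw-Dw*\chi_\delta\|_{L^1}+\|w\|_{L^1}\|\Delta(w*\chi_\delta)\|_{L^\infty}\lesssim\delta+\|w\|_{L^1}/\delta$. Optimizing gives $\|Dw\|_{L^2}\lesssim\|w\|_{L^1}^{1/4}\lesssim\e^{1/8}$, not $\e^{1/4}$. Lipschitz bounds and semiconcavity alone cannot do better. The ridge $w=L\max(\delta-|x_1|,0)$ is the difference of the concave functions $-L|x_1|$ and $-L\max(|x_1|,\delta)$ (suitably periodized), and has $\|w\|_{L^1}\sim\delta^2$ but $\|Dw\|_{L^2}^2\sim\delta$.

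In your splitting, $D(u_\e-v)$ is fine at order $\e^{1/4}$, since $\|u_\e-v\|_{L^\infty}\lesssim\sqrt\e$ needs no (F1). The obstruction is exactly $D(v-u)$, for which you only have an $L^1$ rate. The paper's Step 2 also uses $\|u_\e-u\|_{L^\infty}$, so it does not cover the (F2) branch of \eqref{DpH} either.

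Under \eqref{C11}, however, $\int|Dw|^2\le\|w\|_{L^1}\|\Delta w\|_{L^\infty}$ turns the $L^1$ rate into $\|Dw\|_{L^2}\lesssim\e^{1/4}$. So \eqref{rate density} can be recovered in the (F2) case, and the rest of your density argument coincides with the paper's.
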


\begin{rem}
Assumption \eqref{C11} is satisfied, for instance, when
\begin{itemize}
\item $u(T,\cdot)=u_T\in W^{2,\infty}(\T^d)$ does not depend on $m$ and $\|D^2u(T,\cdot)\|_{L^\infty}$ is small (or the time horizon is small) \cite[Proposition 12.1]{L82Book} ;
\item the data of the Hamilton-Jacobi equation are convex; see \cite{CG25} or \cite[Lemma A.2]{CirantAlpar}.
\end{itemize}
\end{rem}

\begin{proof}
We divide the proof in several steps.\\
\\
\textit{\underline{Step 1}: Proof of \eqref{u}}. We follow \cite{EvansARMA}. By Lemma 2.1 in \cite{CG25}, the function $v_\e=\partial_\e u_\e$ (or alternatively analyze the incremental quotient with respect to the viscosity $\frac{u_{\e+\eta}-u_\e}{\eta}$) solves
\[
-\partial_t v_\e-\e\Delta v_\e+D_pH(x,Du_\e)\cdot Dv_\e=\Delta u_\e+\partial_\e F[m_\e],
\]
equipped with $v_\e(T,x)=\partial_\e G(x,m_\e(T,x))$ on $\T^d$.
Consider now the adjoint problem
\begin{equation}\label{adjoint}
\begin{cases}
\partial_t \rho_\e-\mathrm{div}(D_pH(x,Du_\e)\rho_\e)=\e \Delta \rho_\e,&\text{ in }(\tau,T)\times\T^d,\\
\rho_\e(\tau,x)=\rho_\tau(x),&\text{ in }\T^d,
\end{cases}
\end{equation}
with $\rho_\tau\in L^1$, $\|\rho_\tau\|_1=1$, $\rho_\tau\geq0$. By duality in the cylinder $Q_\tau:=(\tau,T)\times\T^d$, we have
\begin{align*}
|\partial_\e u_\e(\tau)|=\int_{\T^d} \partial_\e u_\e(\tau)\rho(\tau)\,\de x &= \iint_{Q_\tau} (\Delta u_\e+\partial_\e F[m_\e])\rho_\e\,\de x\de t+\int_{\T^d}\partial_\e u(T)\rho_\e(T)\,\de x\\
&\leq \sqrt{d}\left(\iint_{Q_\tau} |D^2u_\e|^2\rho_\e\,\de x\de t\right)^\frac12\sqrt{T}\\
&+\|\partial_\e F[m_\e]\|_{L^\infty(Q_T)} T+\|\partial_\e G[m_\e(T)]\|_{L^\infty(\T^d)}.
\end{align*}

Since $F[m]$ and $G$ are bounded in $W^{1,\infty}$ uniformly with respect to $m\in\mathcal{P}(\T^d)$, we claim that 
\[
2\e\iint_{Q_\tau} |D^2u_\e|^2\rho_\e\,\de x\de t\leq C(\|DG[m_\e(T)]\|_{L^\infty(\T^d)},\|DF[m_\e]\|_{L^\infty(Q_T)},T).
\]
In fact, by the Bernstein method $\omega_\e=|Du_\e|^2$ solves
\[
-\partial_t \omega_\e-\e\Delta \omega_\e+2\e|D^2u_\e|^2+D_pH(x,Du_\e)\cdot D\omega_\e=DF[m_\e]\cdot Du_\e-D_xH(x,Du_\e)\cdot Du_\e.
\]
This evolution can be obtained by differentiating (in space, with respect to $x_i$) the HJ equation, multiplying the resulting PDE by $\partial_{x_i}u_\e$ and summing over $i$.
By duality it follows the estimate, recalling that $\|DF[m_\e]\|_{L^\infty(Q_T)}$ is uniformly bounded by the assumptions on the coupling. More precisely, we have
\begin{align*}
2\e\iint_{Q_\tau} |D^2u_\e|^2\rho_\e\,\de x\de t&\leq \iint_{Q_\tau}(DF[m_\e]\cdot Du_\e-D_xH(x,Du_\e)\cdot Du_\e)\rho_\e\,\de x dt\\
&+\int_{\T^d}\omega(T,x)\rho(T,x)\,\de x-\int_{\T^d}\omega(\tau,x)\rho(\tau,x)\,\de x.
\end{align*}

Therefore, using the growth assumptions of $H$ and the corresponding Lipschitz estimates independent of the viscosity, cf. \cite[p. 31]{CardaliaguetPorretta}, along with the conservation of mass $\int_{\T^d}\rho_\e\,\de x=1$ and Theorem \ref{mainMFG}, we get
\[
|\partial_\e u_\e(\tau,x)|\leq \frac{C}{\sqrt\e},
\]
and hence, after integrating in $\e$, we have
\[
\|(u_\e-u)(\tau,\cdot)\|_{L^\infty(\T^d)}\leq 2C\sqrt{\e}.
\]
\textit{\underline{Step 2}: Proof of \eqref{DpH}.} We write
\[
D_pH(x,Du_\e(t,x))-D_pH(x,Du(t,x))=\left(\int_0^1 D^2_{pp}H(x,\theta Du_\e+(1-\theta)Du)\,d\theta\right)\cdot (Du_\e-Du)
\]
and get, using \eqref{H4},
\begin{align*}
\|D_pH(\cdot,Du_\e(t,x))-D_pH(\cdot,Du)\|_{L^2(\T^d)}^2&\leq \Theta\|(Du_\e-Du)(t,\cdot)\|_{L^2(\T^d)}^2\\
&\leq \Theta\|\Delta (u_\e-u)(t,\cdot)\|_{L^1(\T^d)}\|(u_\e-u)(t,\cdot)\|_{L^\infty(\T^d)}\leq C\e^\frac12.
\end{align*}
This implies the following bound
\[
\|D_pH(\cdot,Du_\e)-D_pH(\cdot,Du)\|_{L^\infty(0,T;L^2(\T^d))}\leq C\theta \e^\frac14.
\]
The proof of \eqref{u1} is similar, but we take now $\rho(\tau)=\mathrm{sgn}(\partial_\e u_\e(\tau))\in L^\infty$, so that
\[
\int_{\T^d} \partial_\e u_\e(\tau,x)\rho(\tau,x)\,\de x=\int_{\T^d} \partial_\e u_\e(\tau,x)\mathrm{sgn}(\partial_\e u_\e(\tau,x))\,\de x=\int_{\T^d}|\partial_\e u_\e(\tau,x)|\,\de x.
\]
We need to estimate
\begin{align*}
\int_{\T^d}|\partial_\e u_\e(\tau,x)|\, \de x&\leq \iint_{Q_\tau} (\Delta u_\e+\partial_\e F[m_\e])\rho_\e\,\de x\de t\\
&\leq \|\Delta u_\e\|_{L^1(Q_\tau)}\|\rho_\e\|_{L^\infty(Q_\tau)}+\|\partial_\e F[m_\e]\|_{L^1(Q_\tau)}\|\rho_\e\|_{L^\infty(Q_\tau)}+\|\partial_\e G[m_\e(T)]\|_{L^1(\T^d)}\\
&\leq C_1+\frac{C_2}{\sqrt\e}.
\end{align*}
Since $\|\Delta u_\e\|_{L^1(Q_T)},\|\rho_\e\|_{L^\infty(Q_T)}$ are bounded independent of $\e$, we get, integrating in $\e$, the following bound
\[
\int_{\T^d}|(u_\e-u)(\tau,x)|\,\de x\leq C_1\e+2C_2\sqrt\e.
\]
\textit{\underline{Step 3}: Proof of \eqref{rate density}.} We denote by $b_\e$ and $b$ the vector fields defined in \eqref{campo viscoso} and \eqref{campo}, namely
\[
b_\e(t,x)=-D_pH(x,Du_\e(t,x)),\qquad b(t,x)=-D_pH(x,Du(t,x)).
\]
The condition \eqref{C11} implies that both $b_\e, b\in L^1((0,T);W^{1,\infty}(\T^d))$ with Lipschitz constant uniformly bounded in $\e$, in particular we can assume that $b_\e$ and $b$ satisfy the bounds \eqref{ipotesi B} for some $M,L>0$ depending on the bounds on $H, u^\e, u$ but uniform in $\e$.
With these notations, the functions $m_\e$ and $m$ solve the equations \eqref{eq:fp} and \eqref{eq:ce} driven by the vector fields $b_\e$ and $b$, respectively. Then, \eqref{DpH} implies that
\[
\|b_\e-b\|_{L^1(Q_T)}=\|D_pH(\cdot,Du_\e)-D_pH(\cdot,Du)\|_{L^1(Q_T)}\leq C\e^\frac14,
\]
and applying Corollary \ref{cor:convergenza fp} we obtain 
\begin{align}
W_2(m_\e(t,\cdot),m(t,\cdot))\leq C \e^\frac18,
\end{align}
for some constant $C=C(M,L,T,d,\|m_0\|_\infty)$.
\end{proof}

\subsection*{Acknowledgements}
The authors are members of INdAM-GNAMPA and were partially supported by the INdAM-GNAMPA project 2025 ``Stabilit\`a e confronto per equazioni di trasporto/dif\-fusione e applicazioni a EDP non lineari''. G.C. is supported by the project PRIN2022 ``Classical equations of compressible fluid mechanics: existence and properties of non-classical solutions''. A.G. is partially supported by the INdAM-GNAMPA project 2026 ``Processi di diffusione non-lineari: regolarit\`a e classificazione delle soluzioni''.



\end{document}